\begin{document}
%%%%%%%%%%%%%%%%

% Outcomment only when entries are known. Otherwise leave as is and 
%   default values will be used.
%\setcounter{page}{1}
%\VOLUME{00}%
%\NO{0}%
%\MONTH{Xxxxx}% (month or a similar seasonal id)
%\YEAR{0000}% e.g., 2005
%\FIRSTPAGE{000}%
%\LASTPAGE{000}%
%\SHORTYEAR{00}% shortened year (two-digit)
%\ISSUE{0000} %
%\LONGFIRSTPAGE{0001} %
%\DOI{10.1287/xxxx.0000.0000}%

% Author's names for the running heads
% Sample depending on the number of authors;
% \RUNAUTHOR{Jones}
% \RUNAUTHOR{Jones and Wilson}
\RUNAUTHOR{Xiaoyu Luo, Mingming XU, and Chuanhou Gao}
% \RUNAUTHOR{Jones et al.} % for four or more authors
% Enter authors following the given pattern:
%\RUNAUTHOR{}

% Title or shortened title suitable for running heads. Sample:
% \RUNTITLE{Bundling Information Goods of Decreasing Value}
% Enter the (shortened) title:
\RUNTITLE{ADPBA }

% Full title. Sample:
\TITLE{ADPBA: Efficiently generating  Lagrangian cuts for two-stage stochastic integer programs }
% Enter the full title:

% Block of authors and their affiliations starts here:
% NOTE: Authors with same affiliation, if the order of authors allows, 
%   should be entered in ONE field, separated by a comma. 
%   \EMAIL field can be repeated if more than one author
\ARTICLEAUTHORS{%
\AUTHOR{Xiaoyu Luo}
\AFF{School of Mathematical Sciences, Zhejiang University, Hangzhou,  China, \EMAIL{12135040@zju.edu.cn}, \URL{}}
\AUTHOR{Mingming Xu}
\AFF{School of Mathematical Sciences, Zhejiang University, Hangzhou,  China, \EMAIL{22135092@zju.edu.cn}, \URL{}}
\AUTHOR{Chuanhou Gao}
\AFF{School of Mathematical Sciences, Zhejiang University, Hangzhou,  China, \EMAIL{gaochou@zju.edu.cn}, \URL{}}
% Enter all authors
} % end of the block

\ABSTRACT{%
The use of Lagrangian cuts proves effective in enhancing the lower bound of the master problem within the execution of benders-type algorithms, particularly in the context of two-stage stochastic programs. However, even the process of generating a single Lagrangian cut is notably time-intensive. In light of this challenge, we present a novel framework that integrates Lagrangian cut generation with an adaptive partition-based approach, thereby mitigating this time-related drawback to a considerable extent.  Furthermore, we also discuss the dominance relationship between the generated partition-based Lagrangian cut and the Lagrangian cut for the original problem. To provide empirical evidence of our approach's efficacy, we undertake an extensive computational study encompassing instances involving even up to a thousand scenarios. The results of this study conclusively demonstrate the superiority and efficiency of the proposed methodology. % Enter your abstract
}%

% Sample
%\KEYWORDS{deterministic inventory theory; infinite linear programming duality; 
%  existence of optimal policies; semi-Markov decision process; cyclic schedule}

% Fill in data. If unknown, outcomment the field
\KEYWORDS{Two-stage stochastic integer program, Partition-based, Lagrangian cut, Fixed continuous recourse}
%\HISTORY{}

\maketitle
%%%%%%%%%%%%%%%%%%%%%%%%%%%%%%%%%%%%%%%%%%%%%%%%%%%%%%%%%%%%%%%%%%%%%%

% Samples of sectioning (and labeling) in TRSC
% NOTE: (1) \section and \subsection do NOT end with a period
%       (2) \subsubsection and lower need end punctuation
%       (3) capitalization is as shown (title style).
%
%\section{Introduction.}\label{intro} %%1.
%\subsection{Duality and the Classical EOQ Problem.}\label{class-EOQ} %% 1.1.
%\subsection{Outline.}\label{outline1} %% 1.2.
%\subsubsection{Cyclic Schedules for the General Deterministic SMDP.}
%  \label{cyclic-schedules} %% 1.2.1
%\section{Problem Description.}\label{problemdescription} %% 2.

% Text of your paper here

\section{Introduction}
Two-stage stochastic programs are being broadly applied to model many realistic production scenarios, such as the facility location problem (\cite{oliveira2022benders,duran2023efficient}), the railway timetable problem (\cite{leutwiler2022logic}), the network flow problem (\cite{rahmaniani2018accelerating}), etc. Logically, there include deterministic decision variables (often part of them are real while part of them are integral) in the first stage and real deterministic decision variables but affected by random occurrence of scenarios in the second stage. The first-stage decision variables need to be determined before the random variable is revealed, and the second-stage decision variables are determined by an optimization problem parameterized by the first-stage variables and the random variables. The objective function of this class of programs is an expectation of the cost for every realization of the random variable(\cite{birge2011introduction}). \

Benefiting from the technique of sample average approximation that makes an approximation to the distribution of the random variable \cite{kleywegt2002sample,  ahmed2002sample,  kim2015guide}, the following extensive deterministic formulation often acts as a starting point to study a two-stage stochastic integral program (SIP)

\begin{subequations}\label{equa:1}
\begin{align}
	     &\min_{x,y^s} c^{\top} x+\sum_{s \in S} p^sd^{\top} y^s, \label{equa1} \\ 
		\text {s.t.}~~& A x=b,  \\
		&  T^s x+W y^s \geq h^s, ~\forall s \in S, \\
		& x \in \mathbb{R}_{+}^{n_1-p_1} \times \mathbb{Z}_{+}^{p_1},~ 
        y^s \in \mathbb{R}_{+}^{n_2}, ~\forall s \in S, \label{pro1}
	\end{align}
 \end{subequations}
 
where $x$ is the first-stage decision variable, $p^s$ is the probability that scenario $s$ occurs, $c\in\mathbb{R}^{n_1}$, $A \in \mathbb{R}^{m_1\times n_1}$, $b \in \mathbb{R}^{m_1}$, $T^s\in\mathbb{R}^{m_2\times n_1}$ and $h^s\in\mathbb{R}^{m_2}$ are scenario-specific, $W\in \mathbb{R}^{m_2\times n_2}$ is the recourse matrix independent of scenario (fixed recourse), $S = \left\{1, 2, ..., \lvert S \rvert\right\}$ is the set of scenarios, and $y^s$ is the second-stage decision variable determined after the stochastic scenario $s$ is revealed. Note that here the second-stage cost vector $d\in\mathbb{R}^{n_2}$ is set to be not scenario-specific, the main reason of which is for the convenience of aggregating $y_s$ under the the adaptive framework. The optimization problem of (\ref{equa:1}) is essentially a large-scale mixed integer program (MIP), which is referred to as the original problem in the context. \

Generally, it is difficult to directly solve (\ref{equa:1}), especially when the number of scenarios is large. An alternative method is to implement Benders decomposition (\cite{bnnobrs1962partitioning,rahmaniani2017benders}) on the problem to make it tractable. Mathematically, the Benders decomposition approach breaks down (\ref{equa:1}) into a master problem 
\begin{subequations}
\label{equa:mas}
\begin{align}
	   &\min_{x\in X} c^{\top}x + \sum_{s\in S} p^s\theta^{s}, \label{1.3}\\
         \text{s.t.}~~& \theta^{s} \geq F^{s}x + g^{s},~ (F^{s}, g^{s}) \in \mathcal{F}^{s}, ~\forall s\in S\label{equa4}
\end{align}
\end{subequations}
with $\mathbb{X} = \left\{ x: Ax = b, x \in \mathbb{R}_{+}^{n_1-p_1} \times \mathbb{Z}_{+}^{p_1}\right\}$, and a subproblem
 \begin{equation}
	    f^{s}(x) := \min_{y^s\in \mathbb{R}^{n_2}_{+}}\left\{d^{\top} y^s \vert T^s x+W y^s \geq h^s \right\} .\label{equa2}
\end{equation}
In (\ref{equa:mas}), $\mathcal{F}^{s}$ is a collection of the Benders cuts, which are generated by the Benders subproblem (\ref{equa2}) and used to improve the lower bound the Benders master problem (\ref{equa:mas}), and $f^{s}(x)$ in (\ref{equa2}) represents the second-stage value function. Moreover, the Benders cut can be written as this form:
\begin{equation}
    \theta^s \geq \pi^{\top}(h^s - T^sx)
\end{equation}
where $\pi$ is any extreme point of the set $\left\{\pi \in \mathbb{R}^m_{+}: \pi^{\top}W \leq d^{\top}\right\}$.\

Despite being an available method, the strategy of Benders decomposition is not always valid to achieve the optimal solution. The main reason is that Benders decomposition essentially maps the linear relaxation of the original problem onto a lower-dimensional space, which often suffers that the relaxed optimal solution oscillates during the cutting plane process and the generated Benders cuts can not improve it accordingly. As a result, the strategy often causes slow convergence to the optimal solution.\

To tackle the issue of slow convergence, lots of efforts have been made towards accelerating Benders decomposition within the framework of two-stage SIPs with finite scenarios. The main focus is on how to strengthen the Benders cuts. Bodur et al. (\cite{bodur2017strengthened}) presented a unified framework called `cut and project' for two-stage SIP with continuous recourse problem. They proved that cutting the Benders subproblem first can result in tighter relaxation of the Benders master problem than implementing the projection first. Zhang and Kucukyavus (\cite{zhang2014finitely}) cut the subproblem iteratively to approximate the convex hull of the upper graph of the value function $f_{s}(x)$, whose work can handle situations where the second stage also contains integer variables. Rahmaniani et al. (\cite{rahmaniani2020benders}) proposed an innovative technique named Benders dual decomposition, aiming to improve the lower bound of the Benders master problem. This method exhibits a large potential to effectively improve the lower bound, but at the expense of transforming its associated subproblem from a linear program to a MIP. The resultant cut generated by this subproblem is termed `Lagrangian cut' (\cite{chen2022generating}) due to its close relationship with the Lagrangian dual decomposition (\cite{caroe1999dual}). There are also some studies on creative algorithms development based on the problem structure for accelerating Benders decomposition. Song and Luestke (\cite{song2015adaptive}) proposed a framework called `adaptive scenario partition' for two-stage stochastic linear programming (SLP). They demonstrated the existence of a small sufficient partition in the case of a simple recourse problem. Under the assumption of fixed recourse, the fundamental concept behind the scenario partition is to cluster the scenario set and construct a relatively coarse lower approximation for the second-stage value function by aggregating the scenarios within the same cluster. Pay and Song (\cite{pay2020partition}) integrated the aforementioned framework with `branch-and-cut' techniques to reduce the time consumed to solve the second-stage recourse problem. However, they did not make any attempt to address integer constraints at the root node. Further, Ackooij et al. (\cite{van2018adaptive}) proposed an adaptive partition level decomposition while Ram\'irez-Pico, et al. (\cite{ramirez2023benders}) proposed adaptive Bender decomposition for two-stage SLP with fixed recourse. \

In line with these studies, this paper is also concerned about the issue of how to accelerate Benders decomposition. Note that the Lagrangian cut is effective in improving the lower bound of the Benders master problem (\cite{rahmaniani2020benders}) and the adaptive partition-based strategy can reduce the time to solve subproblem (\cite{song2015adaptive}), we thus try to develop the algorithm of adaptive partition-based Benders dual decomposition to generate Lagrangian cuts (APbLagC) by aggregating scenarios for two-stage SIP of (\ref{equa:1}), and further solve it in a short time. The main contributions of this paper can be summarized as follows:
\begin{itemize}
    \item[(i)] develop APbLagC algorithm to efficiently generate Lagrangian cuts for addressing two-stage SIPs with fixed recourse and continuous second-stage variables;
    \item[(ii)] prove that in the case of $\dim x=1$ there is no Lagrangian duality gap for the concerned two-stage SIP, which further leads to the monotonicity of the optimal values of the Lagrangian dual relaxations concerning the partition-based problem along the refinement order of scenario partitions;
    \item[(iii)] prove that the Lagrangian relaxations of the partition-based problem might be tighter than those of the original problem (\ref{equa:1}), which stands in sharp contrast to the situation of their linear relaxations;
    \item[(iv)] conduct numerical experiments to show that APbLagC algorithm is effective in lifting the lower bound of the master problem through adding partition-based Lagrangian cut.
\end{itemize}

The rest of this paper is organized as follows. Section 2 presents preliminaries about the scenario partition formulation of the two-stage SIP and the Benders dual decomposition. This is followed by Section 3 where APbLagC algorithm is proposed and the generated partition-based Lagrangian cut is proved to be a valid inequality for Benders formulation. In Section 4, some theoretical analyses are made on the dominance of the partition-based Lagrangian cut. Section 5 exhibits some computational experiments to support the proposed APbLagC algorithm and the theoretical results. Finally, we concludes our work in Section 6. 

\section{Preliminaries}
\label{sec:Background}
In this section, we provide a brief introduction to the scenario partition formulation of two-stage SIP (\cite{song2015adaptive}) and Benders dual decomposition (\cite{chen2022generating}).

\subsection{The partition-based formulation of two-stage SIP}
For the two-stage SIP of (\ref{equa:1}), the main idea of writing its partition-based formulation is to aggregate scenarios according to a certain partition of scenarios. Assume that $\mathcal{N} = \left\{\mathcal{P}_1,\mathcal{P}_2,...,\mathcal{P}_L\right\}$ is a kind of partition of the scenario set $S$ satisfying $\mathcal{P}_1\cup \mathcal{P}_2\cup ...\cup \mathcal{P}_L = S$ and $\mathcal{P}_i \cap \mathcal{P}_j = \emptyset$, $\forall i$, $j \in \{1,2,...,L\}$ and $i \neq j$. Then the partition-based problem can be stated as follows:
\begin{subequations}\label{equa:parti}
\begin{align}
	 &\min_{x,y^\mathcal{P}} c^{\top} x+\sum_{\mathcal{P} \in \mathcal{N}} p^sd^{\top} y^{\mathcal{P}}, \label{2.1}\\
		\text { s.t.}~~&\bar{T}^{\mathcal{P}} x+W y^{\mathcal{P}} \geq \bar{h}^{\mathcal{P}}, ~\forall \mathcal{P} \in \mathcal{N},\label{2.2} \\
		& x \in\mathbb{X},~ y^{\mathcal{P}} \in \mathbb{R}_{+}^{n_2}, ~\forall \mathcal{P} \in \mathcal{N},\label{equa6} 
	\end{align}
 \end{subequations}
 where $y^{\mathcal{P}}=\frac{\sum_{s\in\mathcal{P}}p^s y^s}{\sum_{s\in\mathcal{P}}p^s}$, $\bar{T}^{\mathcal{P}}=\frac{\sum_{s \in \mathcal{P}} p^s T^s}{\sum_{s\in\mathcal{P}}p^s}$, and $\bar{h}^{\mathcal{P}}=\frac{\sum_{s \in \mathcal{P}}p^s h^s}{\sum_{s\in\mathcal{P}}p^s}$. Compared with the original problem of (\ref{equa:1}), the current one seems to involve fewer scenarios and is accordingly more easily to solve. We view it as a relaxation of (\ref{equa:1}), and name it the aggregated problem. Naturally, different partitions will cause different partition-based formulations. The following concept of refinement is used to distinguish them. 

 \begin{definition} [Refinement
  (\cite{song2015adaptive})] Let $\mathcal{N}_1$, $\mathcal{N}_2$ be two kinds of scenario partitions of S, then $\mathcal{N}_1$ is a refinement of $\mathcal{N}_2$ if $\forall \mathcal{P} \in \mathcal{N}_1$, $\mathcal{P}\subset \mathcal{P}^{'}$ for some $\mathcal{P}^{'} \in \mathcal{N}_2$ and $\lvert \mathcal{N}_1 \rvert > \lvert \mathcal{N}_2 \rvert$.
  \end{definition}

  We give an example to exhibit refinement.
\begin{example}
    Let $S = \left\{1, 2, 3, 4\right\}$ be the scenarios set, and $\mathcal{N}_1 = \left\{\left\{1\right\}, \left\{2\right\}, \left\{3, 4\right\}\right\}$ and $\mathcal{N}_2 = \left\{\left\{1, 2\right\}, \left\{3,4\right\}\right\}$ are two kinds of scenario partitions, then $\mathcal{N}_1$ is a refinement of $\mathcal{N}_2$.
\end{example}

The refinement notion will play an important role on approximating (\ref{equa:1}) through (\ref{equa:parti}). \cite{song2015adaptive} developed the adaptive partition-based algorithm, given in  Algorithm \ref{algo1}, to follow this logic. The algorithm iteratively solves the aggregated problem by adaptively updating the partition $\mathcal{P}$ (towards more refined partition) according to the second-stage optimal dual solutions. As a result of refinement every time, the aggregated problem (\ref{equa:parti}) will be closer to the original problem (\ref{equa:1}). Note that in this algorithm it is designed to solve the aggregated problem (\ref{equa:parti}) directly without considering the partition information in the previous steps. \cite{van2018adaptive,pay2020partition} further developed adaptive partition based level decomposition algorithms that utilize these information. We will follow this line to push our work.  

\begin{algorithm}
 \caption{Adaptive Partition-based Algorithm \cite{song2015adaptive}}
 \label{algo1}
 \begin{algorithmic}
     \STATE $z^{ub} \leftarrow +\infty$, assign an initial optimal value $z^{ub}$ of (\ref{equa:parti}) and choose an initial scenario partition $\mathcal{N}$. 
     \WHILE{gap $> \epsilon$}
     \STATE Solve (\ref{equa:parti} )to obtain $\hat{x}$ and the optimal value $z^{\mathcal{N}}$.
    \FOR{$s \in S$} 
     \STATE Solve Benders subproblem (\ref{equa2}) with $x = \hat{x}$ and obtain an optimal dual solution $\hat{\lambda}^{s}$.
    \ENDFOR
    \STATE Refine partition $\mathcal{P}$ based on these optimal dual solution.
    \STATE $z^{ub} \leftarrow \min\left\{z^{ub},c^{\top}\hat{x} + \sum p^sf_{s}\right\}$
    \STATE gap $\leftarrow \frac{z^{ub} - z^{\mathcal{N}}}{z^{ub}}$
    \ENDWHILE
    \end{algorithmic}
    \end{algorithm}

\subsection{Benders dual decomposition}
Benders dual decomposition (\cite{rahmaniani2020benders}) provides a kind of effective way to improve the lower bound of the Benders master problem, but the separation problem to generate Lagrangian cuts is not designed well. The situation was improved by Chen and Luedtke (\cite{chen2022generating}) who reformulated the original problem of (\ref{equa:1}) as the following Benders model:
\begin{subequations}\label{equa8}
\begin{align}
	   & \min _{x, \theta^s}\left\{c^{\top} x+\sum_{s \in S} p_s \theta_s:\left(x, \theta^s\right) \in E^s,~ s \in S\right\},\label{equa8a}\\
  & E^s=\left\{\left(x, \theta_s\right) \in \mathbb{X} \times \mathbb{R}: A x \geq b,~ \theta_s \geq Q_s(x)\right\}, \label{equa8b}\\
  & Q_s(x)=\min _y\left\{d^{\top} y: W y \geq h^s-T^s x, ~y \in \mathbb{R}^{n_2}_{+}\right\}.\label{equa8c}
	\end{align}
 \end{subequations}
 Note that there include integer constraints on the first-stage variables $x$, so unlike the generation of Benders cuts where a linear programming Benders subproblem is solved, it needs to solve a MIP to generate Lagrangian cuts. For this purpose, they designed the optimization problem
\begin{subequations}
\begin{align}
	\bar{Q}_s^*\left(\pi, \pi_0\right)=&\min _x\left\{\pi^{\top} x+\pi_0 Q_s(x): A x \geq b, x \in \mathbb{X}\right\}\\
    =&\min _{x, y}\left\{\pi^{\top} x+\pi_0\left(d\right)^{\top} y:(x, y) \in K^s\right\},
 \end{align}
\end{subequations}
where $(\pi,\pi_0) \in (\mathbb{R}^{n}\times \mathbb{R}^{+})$ and $K^s:=\left\{x \in \mathbb{X}, y \in \mathbb{R}^{n_2}_{+}: A x \geq b, T^s x+W^s y \geq h^s\right\}$. The separation problem thus follows
  \begin{align}
    L_s(\hat{x})=\max_{(\pi,\pi_{0})\in \pi_{s}}\left\{\Bar{Q}_{s}^{\ast}(\pi,\pi_0) - \pi^{\top}\hat{x} - \pi_0\hat{\theta}^{s}:(\pi,\pi_0) \in \Pi_{s}\right\}\label{equa9},
\end{align}
with $\Pi_{s}$ to be any compact subset of $\mathbb{R}^{n}\times \mathbb{R}^{+}$. The inequality 
\begin{equation}\label{LagC}
 \pi^{\top} x+\pi_0 \theta^s \geq \bar{Q}_s^*\left(\pi, \pi_0\right)   
\end{equation}
is called a `Lagrangian cut', which is also a valid inequality for the Benders master problem (\ref{equa:mas}). The reason that Lagrangian cuts can improve its lower bound may be ascertained by the following lemma.

\begin{lemma}
[Strength of Lagrangian cut ](\cite{chen2022generating})The feasible region defined by all the Lagrangian cuts is equivalent to that of the Lagrangian dual relaxation obtained by dualizing the so-called `nonanticipativity constraint', that is 
\begin{equation}
\label{strength}
\left\{{(x,\theta^{s})_{s\in S}: (x,\theta^{s})\in conv(E^{s})}, ~\forall s\in S\right\}.
\end{equation}
\label{Lag}
\end{lemma}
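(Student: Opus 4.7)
The plan is to prove the set equality by two inclusions, treating each scenario $s$ independently since both the Lagrangian-cut system and the set on the right-hand side of (\ref{strength}) decompose across scenarios. The key preliminary observation is that, for any $(\pi,\pi_0)\in\mathbb{R}^{n}\times\mathbb{R}_{+}$, the value $\bar Q_s^{\ast}(\pi,\pi_0)$ coincides with $\inf\{\pi^{\top}x+\pi_0\theta^{s}:(x,\theta^{s})\in E^{s}\}$, because $\pi_0\ge 0$ forces $\theta^{s}=Q_s(x)$ at optimality; and since a linear objective attains the same infimum over a set and over its convex hull, this value also equals $\inf\{\pi^{\top}x+\pi_0\theta^{s}:(x,\theta^{s})\in\mathrm{conv}(E^{s})\}$.

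The easy inclusion is then immediate: every $(x,\theta^{s})\in\mathrm{conv}(E^{s})$ satisfies $\pi^{\top}x+\pi_0\theta^{s}\ge\bar Q_s^{\ast}(\pi,\pi_0)$ for each admissible $(\pi,\pi_0)$, which is exactly the Lagrangian cut (\ref{LagC}). For the reverse inclusion I argue by contrapositive: assume $(\hat x,\hat\theta^{s})\notin\overline{\mathrm{conv}(E^{s})}$ and apply a separating hyperplane theorem to produce $(\pi,\pi_0)\in\mathbb{R}^{n}\times\mathbb{R}$ and a scalar $\alpha$ with $\pi^{\top}\hat x+\pi_0\hat\theta^{s}<\alpha\le\pi^{\top}x+\pi_0\theta^{s}$ for all $(x,\theta^{s})\in E^{s}$. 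I then show $\pi_0\ge 0$: since $(0,+1)$ is a recession direction of $E^{s}$ (the constraint $\theta^{s}\ge Q_s(x)$ is preserved under increases in $\theta^{s}$), any $\pi_0<0$ would drive the right-hand infimum to $-\infty$ and contradict the separation inequality. With $\pi_0\ge 0$ the right-hand infimum equals $\bar Q_s^{\ast}(\pi,\pi_0)$, and the strict inequality becomes precisely a violation of (\ref{LagC}), completing the proof by contrapositive.

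The principal obstacle is technical rather than conceptual: because $x$ carries integer components, $E^{s}$ is not convex and $\mathrm{conv}(E^{s})$ may fail to be closed, so I need to pass to its closure whenever invoking separation. This requires a mild boundedness assumption on $\mathbb{X}$ (standard in the Lagrangian-cut literature) to ensure that the separating hyperplane is finite and that the support-function identity relating $\bar Q_s^{\ast}$ to the minimum over $\mathrm{conv}(E^{s})$ is valid. Modulo this closure subtlety, the argument is a clean convex-analysis computation, and the sign constraint $\pi_0\ge 0$ built into the Lagrangian separation is automatically enforced by the epigraphical (upward-recession) structure of $E^{s}$ in the $\theta^{s}$ coordinate.
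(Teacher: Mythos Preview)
The paper does not supply its own proof of this lemma; it is quoted from \cite{chen2022generating} and used as a black box. Your argument is the standard support-function/separation proof of such a characterization and is correct: the identification of $\bar Q_s^{\ast}(\pi,\pi_0)$ with the infimum of the linear functional over $E^{s}$ (hence over $\mathrm{conv}(E^{s})$), the easy inclusion, and the contrapositive via a separating hyperplane together with the recession-direction argument forcing $\pi_0\ge 0$ are exactly the ingredients used in the original reference.

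The closure caveat you raise is the right one and is handled in \cite{chen2022generating} by assuming $\mathbb{X}$ bounded (so each $E^{s}$ is a mixed-integer set whose convex hull is a polyhedron, hence closed, and $\bar Q_s^{\ast}$ is finite). One minor refinement: when $\pi_0=0$, your claimed identity $\bar Q_s^{\ast}(\pi,0)=\inf\{\pi^{\top}x:(x,\theta^{s})\in E^{s}\}$ tacitly uses relatively complete recourse (so that every $x\in\mathbb{X}$ with $Ax\ge b$ has $Q_s(x)<\infty$); this is also assumed throughout the paper's setting, so no harm done.
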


Clearly, the generated Lagrangian cuts manage to characterise the convex hull of $E^s$ while the Benders cuts could only characterise the linear relaxation of $E^s$ for every scenario $s \in S$. Therefore, the Lagrangian cut is much more effective in improving the lower bound and accelerating the algorithmic convergence.  

\section{The partition-based Lagrangian cut}
Despite proving powerful in improving the lower bound of the Benders master problem, it is very time-consuming to generate Lagrangian cuts due to solving MIP subproblems and the sheer number of scenarios. To increase the efficiency of generating Lagrangian cuts, we borrow the technique of aggregating scenarios adopted to Benders cuts (\cite{pay2020partition}) to ``reduce" the number of scenarios and accordingly to shorten the solving time. We implement this technique on the partition-based problem (\ref{equa:parti}). The generated Lagrangian cut in this way is termed as the ``partition-based Lagrangian cut" (PbLagC) in the context. Imitating (\ref{LagC}), we can directly write out the current one to be
	\begin{align}
		\pi^{\top} x+\pi_0 \theta^\mathcal{P} \geq \bar{Q}_\mathcal{P}^*\left(\pi, \pi_0\right),\label{equa10}
	\end{align}
 
where $\theta^\mathcal{P}=\frac{\sum_{s \in \mathcal{P}} p^s\theta^s}{\sum_{s \in \mathcal{P}} p^s}$, $ \bar{Q}_\mathcal{P}^*\left(\pi, \pi_0\right) = \min _{x, y}\left\{\pi^{\top} x+\pi_0\left(d\right)^{\top} y:(x, y) \in K^\mathcal{P}\right\}$ and similarly, $K^\mathcal{P}:=\left\{x \in \mathbb{X},~ y \in \mathbb{R}^{n_2}_{+}: A x \geq b, ~T^\mathcal{P} x+W y \geq h^\mathcal{P}\right\}$. It is straightforward to get the following proposition. 

\begin{proposition}
\label{P1}
The partition-based Lagrangian cut (\ref{equa10}) is a valid inequality for the Benders formulation (\ref{equa8}).
 \end{proposition}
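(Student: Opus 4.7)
The plan is to verify directly that any feasible point of the Benders formulation (\ref{equa8}) satisfies the inequality (\ref{equa10}), by constructing, from scenario-specific second-stage optima, an aggregated feasible solution of the partition-based recourse subproblem. This is the same pattern used to argue that the aggregated problem (\ref{equa:parti}) is a relaxation of (\ref{equa:1}), and it relies crucially on the fixed recourse $W$ and the scenario-independent cost vector $d$.

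First I would fix an arbitrary $(x,\theta^s)_{s\in S}$ feasible for (\ref{equa8}). By (\ref{equa8b}) this means $x\in\mathbb{X}$, $Ax\geq b$, and $\theta^s\geq Q_s(x)$ for every $s\in S$. Taking the probability-weighted average of these inequalities over $s\in \mathcal{P}$ and using the definition $\theta^{\mathcal{P}}=\sum_{s\in\mathcal{P}}p^s\theta^s/\sum_{s\in\mathcal{P}}p^s$ gives
\begin{equation*}
\theta^{\mathcal{P}} \;\geq\; \frac{\sum_{s\in\mathcal{P}} p^s Q_s(x)}{\sum_{s\in\mathcal{P}} p^s}.
\end{equation*}

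Second, for each $s\in\mathcal{P}$ let $y^s\in\mathbb{R}^{n_2}_{+}$ attain $Q_s(x)$, so $Wy^s\geq h^s - T^s x$ and $d^{\top}y^s=Q_s(x)$. Define the probability-weighted average $y^{\mathcal{P}}=\sum_{s\in\mathcal{P}}p^s y^s/\sum_{s\in\mathcal{P}}p^s$. Since $W$ and $d$ do not depend on $s$, linearity yields $Wy^{\mathcal{P}}\geq \bar{h}^{\mathcal{P}}-\bar{T}^{\mathcal{P}}x$ and $d^{\top}y^{\mathcal{P}}=\sum_{s\in\mathcal{P}} p^s Q_s(x)/\sum_{s\in\mathcal{P}}p^s$, and $y^{\mathcal{P}}\geq 0$. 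Together with $x\in\mathbb{X}$ and $Ax\geq b$, this shows $(x,y^{\mathcal{P}})\in K^{\mathcal{P}}$, hence
\begin{equation*}
\bar{Q}_{\mathcal{P}}^{*}(\pi,\pi_0) \;\leq\; \pi^{\top}x + \pi_0 d^{\top}y^{\mathcal{P}} \;=\; \pi^{\top}x + \pi_0 \frac{\sum_{s\in\mathcal{P}} p^s Q_s(x)}{\sum_{s\in\mathcal{P}} p^s}.
\end{equation*}

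Finally, multiplying the inequality for $\theta^{\mathcal{P}}$ by $\pi_0\geq 0$ and adding $\pi^{\top}x$ to both sides gives $\pi^{\top}x+\pi_0\theta^{\mathcal{P}}\geq \pi^{\top}x+\pi_0\sum_{s\in\mathcal{P}} p^s Q_s(x)/\sum_{s\in\mathcal{P}}p^s\geq \bar{Q}_{\mathcal{P}}^{*}(\pi,\pi_0)$, which is exactly (\ref{equa10}). The only non-routine step is the construction of $y^{\mathcal{P}}$, where the fixed-recourse structure is used to preserve feasibility and objective value under averaging; if $W$ or $d$ were scenario-dependent, this convex-combination argument would fail and the result would require a different route (e.g., via Lagrangian duality).
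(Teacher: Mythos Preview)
Your proof is correct and follows essentially the same approach as the paper's: both argue that a feasible point of (\ref{equa8}) yields, via probability-weighted averaging over $s\in\mathcal{P}$, a point $(x,y^{\mathcal{P}})\in K^{\mathcal{P}}$, from which the inequality (\ref{equa10}) follows by the definition of $\bar{Q}_{\mathcal{P}}^{*}(\pi,\pi_0)$. Your version is considerably more explicit than the paper's terse two-sentence argument, spelling out the construction of $y^{\mathcal{P}}$ and the role of fixed recourse and scenario-independent $d$, but the underlying idea is identical.
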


 \proof{Proof}
    Given a $\mathcal{P} \in \mathcal{N}$, since every feasible point in the Benders formulation (\ref{equa8}) can be stated as $\left\{(x,\theta^s)_{s \in \mathcal{P}}:(x,\theta^s) \in K^s  \ for \ s \in \mathcal{P}\right\}$, $(x,\theta^\mathcal{P}) $ belongs to $K^\mathcal{P}$. Also, from the expression of $ \bar{Q}_\mathcal{P}^*\left(\pi, \pi_0\right)$, we have $\pi^{\top} x+\pi_0 \theta^\mathcal{P} \geq \bar{Q}_\mathcal{P}^*\left(\pi, \pi_0\right)$, which means inequality (\ref{equa10}) is valid for the Benders formulation (\ref{equa8}).
 \endproof

Proposition 1 implies that the inequality (\ref{equa10}) is qualified to enhance the lower bound of the Benders master problem. Some practical instances, like vehicle routing problem with stochastic demands (\cite{florio2020new}), also demonstrate the enhancement efficacy through partitioning scenarios. It is expected that the inequality (\ref{equa10}) can significantly improve the lower bound in high efficiency. For this purpose, PbLagCs are embedded into the Benders-type branch-and-cut framework. Here, we only focus on the root node, and propose APbLagC algorithm to finish the setting task, given in Algorithm \ref{a1}.   
\begin{algorithm}
\caption{APbLagC algorithm}
\label{a1}
\begin{algorithmic}
\STATE $z^{lb} \leftarrow -\infty, z^{ub} \leftarrow +\infty, 
k = 0,\kappa_1 > 0 $, choose an initial partition, e.g., $\mathcal{N} = \left\{\left\{1,2,...,\lvert S\rvert\right\}\right\}$.
\STATE{$refinement \leftarrow False$}.
\WHILE{$continue = True$}
\IF{$refinement = True$}
\STATE $k = k + 1.$
\STATE $LB_{k} \leftarrow$ an empty list.
\STATE update scenario partition by the refinement technology.
\STATE $refinement \leftarrow False$.
\ENDIF
\STATE $cutfound\leftarrow True$.
\WHILE{$cutfound = True$}
\STATE $cutfound \leftarrow False$.

\IF{any Benders cut can be generated}
\STATE $cutfound \leftarrow True$
\ENDIF
\ENDWHILE
\IF{$cutfound = False$}
\FOR{$\mathcal{P}\in \mathcal{N}$}
\STATE generate the  Lagrangian cut associated with this partition.
\IF{A violated Lagrangian cut is generated}
\STATE $cutfound \leftarrow True$.
\ENDIF
\ENDFOR
\STATE Compute the lower bound $z^{lb}$ of the master problem, $LB_{k} = LB_{k} + z^{lb}$.
%\STATE $\textbf{Termination 1}$
\IF{the gap closed by the last five rounds is less than $5\%$ of the total gap closed by the current scenario partition}
\STATE $refinement \leftarrow True$.
\ENDIF
%\STATE $\textbf{Termination 2}$
\IF{$refinement = True$ and $LB_k[-1] - LB_k[0] < \kappa_1*(LB_k[-1] - LB_0[0])$}
\STATE $continue \leftarrow False$.
\ENDIF
\ENDIF
\ENDWHILE
\end{algorithmic}
\end{algorithm}

 In this algorithm, PbLagCs will work together with PbBenCs to improve the lower bound of the Benders master problem. Firstly, at a given scenario partition $\mathcal{N}$, PbBenCs are added to the branch-and-cut tree to improve lower bound after they are generated from solving the partition-based Benders subproblem 
 \begin{equation}
    f_{\mathcal{P}}(\hat{x}) = \mathop{\min}_{y^{\mathcal{P}}\in \mathcal{R}_{+}^{n_{2}}}\left\{d^{\top} y^{\mathcal{P}}\lvert Wy^{\mathcal{P}} \geq \Bar{h}^{\mathcal{P}} -\Bar{T}^{\mathcal{P}}\hat{x}\right\}\label{equa11}
\end{equation}
with $\mathcal{P}\in \mathcal{N}$ and $\hat{x}$ to be a first-stage solution. This process will go on until there are no more Benders cuts that can be separated. Then, PbLagCs are generated through solving the partition-based Lagrangian subproblem
\begin{equation}
    L_{\mathcal{P}}(\hat{x}) = \max_{(\pi,\pi_{0})\in \pi_{P}}\left\{\Bar{Q}_{\mathcal{P}}^{\ast}(\pi,\pi_0) - \pi^{\top}\hat{x} - \pi_0\hat{\theta}_{\mathcal{P}}:(\pi,\pi_0) \in \Pi_{\mathcal{P}}\right\}\label{equa12}
\end{equation}

for each $\mathcal{P}$, where $\Pi_{\mathcal{P}}$ is also any compact subset of $\mathbb{R}^{n}\times \mathbb{R}^{+}$, and are further added to the branch-and-cut tree to observe the change of lower bound of the Benders master problem. When the lower bound fails to decrease significantly, we update the scenario partition towards more refined one and make a new round of solving until the termination condition is true. The detailed refinement operation (\cite{pay2020partition,song2015adaptive}) follows:
\begin{itemize}
    \item [(1)] at a given partition $\mathcal{N}$, $\forall\mathcal{P}\in \mathcal{N}$, denote the optimal dual multipliers of (\ref{equa2}) by $\hat{\lambda}^{s}$ for any $s\in \mathcal{P}$;
    \item[(2)] let $\left\{\mathcal{K}^{1},...,\mathcal{K}^M\right\}$ be a partition of $\mathcal{P}$ such that $\vert \hat{\lambda}^{s} - \hat{\lambda}^{s'} \vert \leq \delta, \forall s,s' \in \mathcal{K}^{m}$ and $\forall m = 1,...,M$, where $\delta > 0$ is a given threshold;
    \item[(3)] remove $\mathcal{P}$ from $\mathcal{N}$ and add components $\mathcal{K}^{1},...,\mathcal{K}^M$ to it.
\end{itemize}

The solving will stop at a certain scenario partition $\mathcal{N}$. If $\lvert \mathcal{N}\rvert \ll \lvert S \rvert$, the time to generate PbLagCs will significantly decrease. 

%The best result is at the case of $\lvert \mathcal{N}\rvert= 1$. 

\section{Dominance analysis}
 \label{dominance}
In this section, we give some theoretical analysis on dominance relation between Lagrangian cuts and PbLagCs to support Algorithm \ref{a1}. Here, the dominance is defined as follows. 

\begin{definition}\label{def:dominance}
Given two classes of benders-type inequalities
$$ \left\{\theta \geq \alpha_{1}^ix + \beta_1^i
 , i \in I\right\}~ \text{and}~ \left\{\theta \geq \alpha_{2}^jx + \beta_2^j, j \in J\right\}, 
 $$
the latter is said to dominate the former in the weak sense if $\forall x \in \mathbb{X}$ that satisfies the latter must satisfy the former.
 \end{definition}
 
The following proposition exhibits the dominance relation between Benders cuts and PbBenCs.   
 \begin{proposition}
\label{P4.2}
Denote any extreme point of the dual problem of (\ref{equa11}) by $\hat{\lambda}^\mathcal{P}$, then PbBenCs given by 
\begin{equation}\label{eq:PbBenCs}
\theta^\mathcal{P}\geq \left(\Bar{h}^\mathcal{P}-\Bar{T}^\mathcal{P}x\right)^{\top} \hat{\lambda}^\mathcal{P}
\end{equation}
are valid inequalities for the Benders formulation of (\ref{equa8}). Moreover, they must be dominated by some Benders cuts.
 \end{proposition}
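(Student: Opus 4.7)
The argument hinges on one structural observation: because the recourse matrix $W$ is fixed across scenarios and the second-stage cost vector $d$ is also scenario-independent, the dual feasible region
\begin{equation*}
\Lambda \;:=\; \{\lambda \in \mathbb{R}^{m_2}_+ : W^\top \lambda \leq d\}
\end{equation*}
of the partition-based subproblem~(\ref{equa11}) coincides with the dual feasible region of each per-scenario subproblem~(\ref{equa2}). Consequently every extreme point $\hat{\lambda}^\mathcal{P}$ of the dual of~(\ref{equa11}) is simultaneously an extreme point of the dual polyhedron attached to every individual $s\in\mathcal{P}$, and can therefore be used as a multiplier in both the aggregated and the non-aggregated Benders cuts.

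Validity comes first. I would take any $(x,(\theta^s)_{s\in S})$ feasible for~(\ref{equa8}), so that $\theta^s \geq Q_s(x)$ for every $s$, and apply LP weak duality to $Q_s(x)$ at the dual-feasible point $\hat{\lambda}^\mathcal{P}\in\Lambda$ to obtain
\begin{equation*}
\theta^s \;\geq\; Q_s(x) \;\geq\; (h^s - T^s x)^\top \hat{\lambda}^\mathcal{P}, \qquad \forall s \in \mathcal{P}.
\end{equation*}
Multiplying each such inequality by $p^s$, summing over $s\in\mathcal{P}$, and dividing by $\sum_{s\in\mathcal{P}}p^s$ yields
\begin{equation*}
\theta^\mathcal{P} \;\geq\; (\bar{h}^\mathcal{P} - \bar{T}^\mathcal{P} x)^\top \hat{\lambda}^\mathcal{P},
\end{equation*}
directly from the definitions of $\theta^\mathcal{P}$, $\bar{h}^\mathcal{P}$ and $\bar{T}^\mathcal{P}$. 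This is precisely~(\ref{eq:PbBenCs}), establishing validity of the PbBenC for~(\ref{equa8}).

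Dominance then follows from the same arithmetic read in reverse. Since $\hat{\lambda}^\mathcal{P}$ is an extreme point of $\Lambda$, the inequality $\theta^s \geq (h^s - T^s x)^\top \hat{\lambda}^\mathcal{P}$ is itself a legitimate Benders cut for scenario $s$ in the form introduced in Section~1. Any $(x,(\theta^s)_{s\in S})$ that satisfies these $|\mathcal{P}|$ individual Benders cuts must therefore satisfy their $p^s$-weighted average, which is exactly the PbBenC generated by $\hat{\lambda}^\mathcal{P}$; by Definition~\ref{def:dominance} this collection of Benders cuts dominates the corresponding PbBenC in the weak sense.

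The only mildly delicate point is verifying that $\hat{\lambda}^\mathcal{P}$ is genuinely an extreme point of the common polyhedron $\Lambda$ rather than of some object intrinsic to the aggregated problem, but this is immediate from fixed recourse since the dual feasible set depends only on $W$ and $d$. Beyond that, the entire proof reduces to the linearity of the convex combination defining $\theta^\mathcal{P}$, $\bar{h}^\mathcal{P}$ and $\bar{T}^\mathcal{P}$, so I do not anticipate any serious technical obstacle.
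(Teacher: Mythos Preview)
Your proposal is correct and follows essentially the same approach as the paper: both arguments rest on the observation that, under fixed recourse and scenario-independent $d$, the dual feasible polyhedron $\{\lambda\geq 0: W^\top\lambda\leq d\}$ and hence its extreme points are identical across the per-scenario and aggregated subproblems, so the PbBenC is precisely the $p^s$-weighted convex combination of the Benders cuts $\theta^s\geq (h^s-T^sx)^\top\hat{\lambda}^\mathcal{P}$ for $s\in\mathcal{P}$. Your write-up simply spells out the averaging and the weak-duality step more explicitly than the paper does.
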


\begin{proof}{Proof}
Note that it is not a new result that PbBenCs of (\ref{eq:PbBenCs}) are valid for (\ref{equa8}) (\cite{pay2020partition}). However, we will provide a more concise proof towards it which allows to get dominance relation meanwhile.   

Since (\ref{equa:1}) has a constant recourse matrix $W$, the set of extreme points of the dual problem for the partition-based Benders subproblem (\ref{equa11}) is the same as that for the Benders subproblem (\ref{equa2}). Therefore, the PbBenCs given in (\ref{eq:PbBenCs}) are in fact a convex combination of Benders cuts, so they are also valid for (\ref{equa8}), and moreover, they are weaker than the corresponding Benders cuts according to definition \ref{def:dominance}.
\end{proof}

Proposition \ref{P4.2} implies that the role of every partition-based Benders cut can be replaced by some Benders cuts. Therefore, PbBenCs can not cut off any point in the feasible region of the linear relaxation of the Benders master formulation (\ref{equa:mas}). At this point, the result seems discouraging to demonstrate the role of scenario partition. However, for Lagrangian cuts the situation may be quite different. To this argument, we introduce an extension of Jenson's inequality (\cite{pavic2014convex}).

\begin{lemma}
[An extension of Jenson's inequality]
 (\cite{pavic2014convex}) Consider a convex function $g:\mathbb{R}^n\to\mathbb{R}$, and an integer cubic with volume 1 and vertexes represented by $\left\{x_i\right\}_{i = 1}^{2^n}$. For a point $p$ within the cubic that can be expressed by a convex combination of these vertexes, that is $p = \sum_{i}\lambda_ix_i$, then for any other convex collection of integer points $\left\{y_j\right\}_{j = 1}^{m}$ for $p$, i.e., $p = \sum_{j}\nu_jy_j$, there is
 \begin{align}
     g(p) &\leq \sum_{i}\lambda_ig(x_i)\leq \sum_{j}\nu_jg(y_j).
 \end{align}\label{Convex}
 \end{lemma}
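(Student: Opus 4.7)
The plan is to split the two inequalities, since they have different natures. The left inequality $g(p) \le \sum_{i}\lambda_{i} g(x_{i})$ is immediate from classical Jensen's inequality applied to the convex function $g$ at the convex combination $p = \sum_{i}\lambda_{i} x_{i}$, so essentially all of the work lies in the right inequality.

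For the right inequality, my plan is to exhibit a single convex function $\tilde{g}:\mathbb{R}^{n}\to\mathbb{R}$ that (i) agrees with $g$ on $\mathbb{Z}^{n}$ and (ii) satisfies $\tilde{g}(p)=\sum_{i}\lambda_{i}g(x_{i})$ on the unit cube in question. Once such a $\tilde g$ is in hand, applying Jensen a second time, now to $\tilde{g}$ at the alternative representation $p = \sum_{j}\nu_{j} y_{j}$, gives
$$
\sum_{i}\lambda_{i}\, g(x_{i}) \;=\; \tilde{g}(p) \;\le\; \sum_{j}\nu_{j}\, \tilde{g}(y_{j}) \;=\; \sum_{j}\nu_{j}\, g(y_{j}),
$$
where the last equality uses $\tilde g(y_j)=g(y_j)$ because each $y_{j}$ is an integer point. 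A natural construction for $\tilde{g}$ is the lower convex envelope of the set $\{(z,g(z)) : z \in \mathbb{Z}^{n}\}$, or equivalently a piecewise linear interpolant obtained by triangulating each integer unit cube (for instance via a Kuhn triangulation into $n!$ simplices) and taking the affine interpolant of the vertex values of $g$ on each simplex. Property (i) then follows from a one-step Jensen applied to $g$ itself, since any convex combination of integer points representing an integer $z$ must have $g$-value at least $g(z)$.

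The main obstacle is showing that this $\tilde{g}$ is globally convex on $\mathbb{R}^{n}$, not merely affine within each simplex. Within a single cube the interpolant is affine on each simplex by construction, so convexity reduces to a monotonicity condition on gradient jumps across adjacent simplex faces, and this in turn reduces to a discrete second-difference inequality on the integer values of $g$; that discrete inequality is itself implied by the continuous convexity of $g$ by one more application of standard Jensen to a convex combination supported on the relevant integer points. A secondary subtlety is to arrange the triangulation so that the simplex containing $p$ yields exactly the weights $\{\lambda_{i}\}$ prescribed in the hypothesis, so that the equality $\tilde{g}(p) = \sum_{i}\lambda_{i}g(x_{i})$ holds on the nose rather than up to some other cube-vertex combination. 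Once both checks are carried out, the double Jensen argument above closes the chain and yields the lemma.
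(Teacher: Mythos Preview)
The paper does not prove this lemma; it is quoted verbatim as a result of Pavi\'c (2014) and is only \emph{used} (in the one-dimensional case) inside the proof of the next proposition. So there is no ``paper's own proof'' to compare against. What can be assessed is whether your outline actually establishes the stated inequality.

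Your plan for the left inequality is correct and complete. For the right inequality your strategy---build a convex $\tilde g$ that matches $g$ on $\mathbb{Z}^n$ and equals $\sum_i\lambda_i g(x_i)$ at $p$, then apply Jensen to the alternative representation---is the right idea, and the lower convex envelope of $g|_{\mathbb{Z}^n}$ is the natural candidate. Showing $\tilde g(z)=g(z)$ at integer $z$ via a supporting hyperplane of $g$ is clean, and $\tilde g(p)\le\sum_j\nu_j g(y_j)$ is then immediate.

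The real gap is the step you label a ``secondary subtlety,'' and it is not secondary. In dimension $n\ge 2$ the cube-vertex representation $p=\sum_i\lambda_i x_i$ is non-unique, and the right inequality is \emph{false} for a generic choice of $\lambda_i$. For instance, with $g(x,y)=(x-y)^2$ (convex) and $p=(\tfrac12,\tfrac12)$, the multilinear weights $\lambda_i=\tfrac14$ give $\sum_i\lambda_i g(x_i)=\tfrac12$, yet the integer representation $p=\tfrac12(0,0)+\tfrac12(1,1)$ gives $\sum_j\nu_j g(y_j)=0$. So no proof can succeed without first pinning down which $\lambda_i$ are meant: the statement implicitly takes the cube-vertex combination that \emph{minimises} $\sum_i\lambda_i g(x_i)$, i.e.\ the one realising the convex-envelope value $\tilde g(p)$. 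Your Kuhn-triangulation interpolant will in general not produce the hypothesised weights, so you cannot ``arrange the triangulation'' to hit an arbitrary prescribed $\lambda$; instead you must show that $\tilde g(p)$ is attained by \emph{some} cube-vertex combination and identify $\lambda$ with that one. In the one-dimensional case---the only case the paper actually invokes---the representation $p=(\lceil x\rceil-x)\lfloor x\rfloor+(x-\lfloor x\rfloor)\lceil x\rceil$ is unique and your argument goes through without this issue.
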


 In addition, under the circumstance of dominance analysis of PbLagCs, based on (\ref{Lag}) the recourse value $f^s$ for scenario $s$ at a given first-stage variable $\hat{x}$ is referred to as the minimum value of the upper graph of the Lagrangian dual relaxation, i.e., $f^s(\hat{x}) = \min \left\{\theta^s, (\hat{x}, \theta^s) \in conv(E^s)\right\}$. Note that this reference is different from (\ref{equa2}) except when $\hat{x}$ is an integer point.\

Utilizing Lemma \ref{Convex} and the above reference of the recourse value, we can analyze the Lagrangian dual gap for the original problem in the case of 1-dimensional first-stage integral variable, and further discuss the corresponding partition-based problem.

\begin{proposition}
In the case of 1-dimensional first-stage integral variable of the original problem, i.e.,    $$
	\begin{aligned}
		& \min c^{\top} x+\sum_{s \in S} p^sd^{\top}y^s, \\
		& \qquad T^s x+Wy^s \geq h^s ,~\forall s \in S, \\
		& \qquad x \in \mathbb{X},~ \mathbb{X} \subseteq \mathbb{Z},~y^s \in \mathbb{R}_{+}^{n_2},~ \forall s \in S,
	\end{aligned}
	$$
we have
\begin{itemize}
    \item [(1)] there is no Lagrangian duality gap for this SIP problem;
    \item[(2)] $\mathcal{N}_1,~ \mathcal{N}_2$ are two kinds of partitions of $S$ such that $\mathcal{N}_1$ is a refinement of $\mathcal{N}_2$, then $z^{\mathcal{N}_1} \geq z^{\mathcal{N}_2} $, where $z^{\mathcal{N}}$ denotes the optimal solution of (\ref{equa:parti}).
\end{itemize}
\end{proposition}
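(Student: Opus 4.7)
The plan is to treat the two claims separately, both hinging on the 1-dimensional structure of the first stage.

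For part (1), the starting point is Lemma \ref{Lag}, which identifies the Lagrangian dual value (from dualizing nonanticipativity) as $z^{LD} := \min\{c^\top x + \sum_{s \in S} p^s \theta^s : (x, \theta^s) \in conv(E^s),~\forall s \in S\}$. Since $W$ is fixed, $Q_s$ is the value function of a linear program parametrized linearly in its right-hand side and is therefore convex in $x$. For 1-dimensional integer $x$, I would argue that $conv(E^s)$ coincides with the epigraph of the piecewise linear function $\hat{Q}_s$ that, on each interval $[n, n+1]$ of adjacent feasible integers, equals $\lambda Q_s(n) + (1-\lambda) Q_s(n+1)$ at $x = \lambda n + (1-\lambda)(n+1)$. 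This is where Lemma \ref{Convex} enters: it certifies that no other convex combination of integer points representing $x$ yields a value below the adjacent-pair combination, so this interpolation is exactly the lower boundary of $conv(E^s)$.

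With $\hat{Q}_s$ in hand, the inner minimization over $\theta^s$ reduces the Lagrangian dual to $z^{LD} = \min_x\{c^\top x + \sum_s p^s \hat{Q}_s(x) : x \in conv(\mathbb{X})\}$. The objective is a piecewise linear convex function whose breakpoints are all feasible integers, so its minimum is attained at some integer $\bar{x}$, where $\hat{Q}_s(\bar{x}) = Q_s(\bar{x})$. Hence $z^{LD}$ equals the original objective at the feasible integer point $\bar{x}$, giving $z^{LD} \geq z^*$, and combined with the standard weak duality bound $z^{LD} \leq z^*$ we conclude $z^{LD} = z^*$, i.e., no Lagrangian duality gap.

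For part (2), I plan a direct aggregation argument. Given any $\mathcal{N}_1$-feasible solution $(\hat{x}, \{y^\mathcal{P}\}_{\mathcal{P} \in \mathcal{N}_1})$, for each $\mathcal{P}' \in \mathcal{N}_2$ set $y^{\mathcal{P}'} := (\sum_{s \in \mathcal{P}'} p^s)^{-1} \sum_{\mathcal{P} \in \mathcal{N}_1,\, \mathcal{P} \subseteq \mathcal{P}'} (\sum_{s \in \mathcal{P}} p^s)\, y^\mathcal{P}$. Because $\bar{T}^{\mathcal{P}'}$ and $\bar{h}^{\mathcal{P}'}$ are the matching probability-weighted averages of the $\bar{T}^\mathcal{P}$ and $\bar{h}^\mathcal{P}$ over $\mathcal{P} \subseteq \mathcal{P}'$, the $\mathcal{N}_2$-constraint $\bar{T}^{\mathcal{P}'} \hat{x} + W y^{\mathcal{P}'} \geq \bar{h}^{\mathcal{P}'}$ follows as a convex combination of the corresponding $\mathcal{N}_1$-constraints, and reorganizing the double sum shows the objective is preserved. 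Hence $z^{\mathcal{N}_2} \leq z^{\mathcal{N}_1}$. The 1-dim hypothesis is not strictly needed for this aggregation per se, but it is what lets part (1) propagate the same ordering to the Lagrangian dual relaxations of the partition-based problems, recovering the claim advertised in the introduction.

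The main obstacle is the convex envelope characterization in part (1): I need $conv(E^s)$ to equal the epigraph of $\hat{Q}_s$ rather than a larger set admitting lower $\theta^s$ values. This reduces to showing that no integer convex combination of $(x_i, Q_s(x_i))$ dips below the adjacent-pair interpolation, which is exactly the 1-dim content of Lemma \ref{Convex}. The hypothesis $\dim x = 1$ is essential here: the extension of Jensen's inequality delivers a closed-form convex envelope only when each point of $conv(\mathbb{X})$ decomposes uniquely into vertices of a single unit interval of adjacent integers, and in higher dimensions the analogous adjacent-cube decomposition is ambiguous, so the piecewise linear structure whose breakpoints are integer may be lost and a genuine duality gap can arise.
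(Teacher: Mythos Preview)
Your proposal is correct and follows essentially the same line as the paper. For part (1) both arguments pivot on Lemma~\ref{Convex} to show that the adjacent-integer decomposition $x=\overline{\mu}\lfloor x\rfloor+\underline{\mu}\lceil x\rceil$ dominates every other integer convex combination---the paper phrases this as a uniform two-point representation of any $(x,\theta^s)\in conv(E^s)$ with scenario-independent weights, you phrase it as $conv(E^s)$ being the epigraph of the piecewise-linear interpolant $\hat Q_s$ whose integer breakpoints force the minimum to an integer---and for part (2) the paper simply writes ``straightforward from (1)'' where you spell out the direct aggregation explicitly.
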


\begin{proof}{Proof}
 We continue the proof in two separate items:
 
(1) the given SIP problem can be reformulated as:
	$$\begin{aligned}
		& \min c^{\top} x+\sum_{s \in S} p_s\theta^s, \\
		& \qquad \theta^se \geq \bar{h}^s - \bar{T}^sx ,~\forall s \in S, \\
		& \qquad x \in \mathbb{X},~ \mathbb{X} \subseteq \mathbb{R},~\theta^s \in \mathbb{R}, ~\forall s \in S,
	\end{aligned}$$
where $e$ is a dimension-suited vector with all entries to be 1. We can rewrite $E^s$ to be $E^s = \left\{(x,\theta^s):x \in \mathbb{X},\theta^s e \geq \bar{h}^s - \bar{T}^sx \right\}$ and try to prove the claim that $\forall s \in S$ and $\forall (x,\theta^s) \in conv(E^s)$, there is $(x,\theta^s) = \lambda_1(x_1,\theta_1^s) + \lambda_2(x_2,\theta_2^s)$, where non-negative weights $\lambda_1+\lambda_2=1$ and $(x_1,\theta_1^s),~(x_2,\theta_2^s) \in E^s$.
	
Clearly, the first-stage integral variable $x$ can be written as $x = \overline{\mu}\lfloor x \rfloor + \underline{\mu}\lceil x \rceil$, where $\overline{\mu} = \lceil x \rceil - x$ and $ \underline{\mu} = x - \lfloor x \rfloor$ satisfying $\overline{\mu}+\underline{\mu}=1$. Since $(x,\theta^s) \in conv(E^s)$, it can be represented as a convex combination of points in $E^s$, that is $(x,\theta^s) = \sum_{i = 1}^{n}\lambda_i (x_i,\theta_i),$ where $(x_i,\theta_i) \in E^s$. Therefore, $\theta^s \geq \sum_{i = 1}^{n}\lambda_i Q_s(x_i) \geq \overline{\mu}Q_s(\lfloor x \rfloor) + \underline{\mu}Q_s(\lceil x \rceil)$, where the last inequality comes from Lemma \ref{Convex}. Hence, by setting $\lambda_1=\overline{\mu}$ and $\lambda_2=\underline{\mu}$ the above claim is true. Namely, any point in the region of Lagrangian relaxation can be represented as a convex combination of two feasible points. This means there is no Lagrangian duality gap for the considered SIP.
 
(2) The result is straightforward from that of (1).
\end{proof}

This proposition suggests that the Lagrangian dual gap vanishing may result from the unifying convex combination expression for different Lagrangian feasible point in different scenario, which conversely explains where the Lagrangian dual gap comes from, i.e., from inconsistent convex combination expression. In the following, we present a sufficient condition to say the recourse value will decrease when scenarios are aggregated, which is the same phenomenon as the linear relaxation.   

\begin{proposition}\label{pro:parrecvalue}
 For the original SIP problem (\ref{equa:1}), assume $\hat{x}$ to be a fixed first-stage solution, $s_1$, $s_2$ to be two scenarios with probability weights $p_{s_1}$ and $p_{s_2}$, respectively, and they are aggregated into class $\mathcal{P}=\{s_1,s_2\}$. If for  points $(x_i, \theta^{s_j}_i) \in E^{s_j},~i \in I,~j=1,2$, there are $(\hat{x}, f^{s_1}(\hat{x})) = \sum_{i \in I}\lambda_i(x_i, \theta^{s_1}_i)$, and $(\hat{x}, f^{s_2}(\hat{x})) = \sum_{i \in I}\lambda_i(x_i, \theta^{s_2}_i)$, where $\lambda_i > 0$ and $\sum_{i \in I}\lambda_i = 1$, then we have 
 \begin{equation}\label{ParRecValue}
 f_{\mathcal{P}}(\hat{x}) \leq \frac{p_{s_1}f^{s_1}(\hat{x}) + p_{s_2}f^{s_2}(\hat{x})}{p_{s_1} + p_{s_2}},
 \end{equation}
 where $f_{\mathcal{P}}(\hat{x})$ follows the expression of (\ref{equa11}).
\end{proposition}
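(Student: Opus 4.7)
The plan is to prove the inequality by constructing an explicit feasible $y^{\mathcal{P}}$ for the partition-based subproblem (\ref{equa11}) at $\hat{x}$ whose cost is at most the claimed upper bound. The hypothesis is designed to give matching convex combination weights $\lambda_i$ and matching supporting points $x_i$ across the two scenarios, and I will exploit this alignment directly.

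First, I will unpack the hypothesis. For each $i\in I$ and $j=1,2$, the containment $(x_i,\theta^{s_j}_i)\in E^{s_j}$ means that $x_i\in\mathbb{X}$, $Ax_i\ge b$, and there exists some $y_i^{s_j}\in\mathbb{R}_+^{n_2}$ with $Wy_i^{s_j}\ge h^{s_j}-T^{s_j}x_i$ and $d^{\top}y_i^{s_j}\le\theta^{s_j}_i$. Reading the hypothesized convex combinations coordinate-wise gives $\hat{x}=\sum_i\lambda_i x_i$ and, importantly, $f^{s_j}(\hat{x})=\sum_i\lambda_i\theta^{s_j}_i$ for $j=1,2$.

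Next, I will propose the candidate
\begin{equation*}
y^{\mathcal{P}}\;=\;\sum_{i\in I}\lambda_i\,\frac{p_{s_1}y_i^{s_1}+p_{s_2}y_i^{s_2}}{p_{s_1}+p_{s_2}},
\end{equation*}
which lies in $\mathbb{R}_+^{n_2}$ because all $\lambda_i,p_{s_j}$ are non-negative. Applying $W$ and using the per-scenario feasibility of $y_i^{s_j}$ gives, term-by-term,
\begin{equation*}
Wy^{\mathcal{P}}\;\ge\;\sum_{i\in I}\lambda_i\,\frac{p_{s_1}(h^{s_1}-T^{s_1}x_i)+p_{s_2}(h^{s_2}-T^{s_2}x_i)}{p_{s_1}+p_{s_2}}\;=\;\sum_{i\in I}\lambda_i\bigl(\bar{h}^{\mathcal{P}}-\bar{T}^{\mathcal{P}}x_i\bigr)\;=\;\bar{h}^{\mathcal{P}}-\bar{T}^{\mathcal{P}}\hat{x},
\end{equation*}
where the first equality uses the definitions of $\bar{h}^{\mathcal{P}},\bar{T}^{\mathcal{P}}$ from (\ref{equa:parti}) with $|\mathcal{P}|=2$, and the second uses $\hat{x}=\sum_i\lambda_i x_i$ together with $\sum_i\lambda_i=1$. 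Hence $y^{\mathcal{P}}$ is feasible for (\ref{equa11}). Evaluating the objective and using $d^{\top}y_i^{s_j}\le\theta^{s_j}_i$ componentwise then yields
\begin{equation*}
d^{\top}y^{\mathcal{P}}\;\le\;\sum_{i\in I}\lambda_i\,\frac{p_{s_1}\theta^{s_1}_i+p_{s_2}\theta^{s_2}_i}{p_{s_1}+p_{s_2}}\;=\;\frac{p_{s_1}f^{s_1}(\hat{x})+p_{s_2}f^{s_2}(\hat{x})}{p_{s_1}+p_{s_2}},
\end{equation*}
and the desired bound (\ref{ParRecValue}) follows since $f_{\mathcal{P}}(\hat{x})\le d^{\top}y^{\mathcal{P}}$ by the minimization defining $f_{\mathcal{P}}$.

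The only delicate point is justifying the construction of $y^{\mathcal{P}}$, and this is exactly where the shared weights $\lambda_i$ and shared supports $x_i$ in the hypothesis are essential: they allow a single $i$-indexed convex combination that is simultaneously compatible with both scenarios' right-hand sides after averaging, which is what produces $\bar{h}^{\mathcal{P}}-\bar{T}^{\mathcal{P}}\hat{x}$ on the nose. If the two scenarios used different weights or different support points, the same linear-combination trick would not close up to the partition averages, and an extra convexity argument (perhaps invoking Lemma \ref{Convex} on $f_{\mathcal{P}}$) would be needed. Under the stated hypothesis, however, no such argument is required and the proof reduces to the elementary construction above.
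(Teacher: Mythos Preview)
Your proof is correct and follows essentially the same route as the paper: both show that the weighted averages of the scenario-level data give a point feasible for the aggregated problem with cost at most $\frac{p_{s_1}f^{s_1}(\hat{x})+p_{s_2}f^{s_2}(\hat{x})}{p_{s_1}+p_{s_2}}$. The only cosmetic difference is that the paper phrases this at the epigraph level---arguing that $(x_i,\theta^{\mathcal{P}}_i)\in E^{\mathcal{P}}$ with $\theta^{\mathcal{P}}_i=\frac{p_{s_1}\theta^{s_1}_i+p_{s_2}\theta^{s_2}_i}{p_{s_1}+p_{s_2}}$ and hence $(\hat{x},\hat{\theta}^{\mathcal{P}})\in\operatorname{conv}(E^{\mathcal{P}})$---whereas you carry out the same averaging explicitly on the $y$-variables.
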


\begin{proof}{Proof}
From $E^{\mathcal{P}}=\left\{\left(x, \theta^{\mathcal{P}}\right) \in \mathbb{X} \times \mathbb{R}: A x \geq b,~ \theta^{\mathcal{P}} \geq Q_{\mathcal{P}}(x)\right\}$, we have $(x_i, \theta^{\mathcal{P}}_{i}) \in E^{\mathcal{P}}$, where $\theta^{\mathcal{P}}_{i} = \frac{p_{s_1}\theta^{s_1}_{i} + p_{s_2}\theta^{s_2}_{i}}{p_{s_1} + p_{s_2}}$. Also, since $(\hat{x}, f^{s_j}(\hat{x})) = \sum_{i \in I}\lambda_i(x_i, \theta^{s_j}_i)$, we get $(\hat{x}, \hat{\theta}^{\mathcal{P}})\in conv(E^{\mathcal{P}})$. Further, from $p_{s_1}(T^{s_1}\hat{x} + W\hat{y}^{s_1}) + p_{s_2}(T^{s_2}\hat{x} + W\hat{y}^{s_2}) \geq p_{s_1}h^{s_1} + p_{s_2}h^{s_2}$, where $\hat{y}^{s_j}$ is the optimal solution for $f^{s_j}(\hat{x})$, we obtain (\ref{ParRecValue}) immediately. 
\end{proof}

\begin{remark}
Proposition \ref{pro:parrecvalue} renders that if the mentioned condition is true, then PbLagCs are dominated by some Lagrangian cuts, which implies that if this condition is not true, it is possible to get dominating PbLagCs compared with the Lagragian cuts.  
\end{remark}

\begin{theorem}\label{StrongPLgC}
PbLagCs are not necessarily dominated by any Lagrangian cut for the original SIP problem of (\ref{equa:1}).
\end{theorem}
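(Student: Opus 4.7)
The plan is to prove the theorem by exhibiting an explicit counterexample. Lemma \ref{Lag}, together with its natural extension to the partition-based setting, says that the region carved out by all Lagrangian cuts for scenario $s$ is exactly $conv(E^{s})$, and the region carved out by all PbLagCs for a class $\mathcal{P}$ is $conv(E^{\mathcal{P}})$. Combined with Definition \ref{def:dominance}, this reduces the theorem to producing a SIP instance together with a two-element class $\mathcal{P}=\{s_{1},s_{2}\}$ and a point $(\hat{x},\hat{\theta}^{s_{1}},\hat{\theta}^{s_{2}})$ satisfying $(\hat{x},\hat{\theta}^{s_{j}})\in conv(E^{s_{j}})$ for $j=1,2$, yet whose weighted projection $\hat{\theta}^{\mathcal{P}}=(p^{s_{1}}\hat{\theta}^{s_{1}}+p^{s_{2}}\hat{\theta}^{s_{2}})/(p^{s_{1}}+p^{s_{2}})$ satisfies $(\hat{x},\hat{\theta}^{\mathcal{P}})\notin conv(E^{\mathcal{P}})$, thereby violating some PbLagC while satisfying every Lagrangian cut.

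The preceding 1-dimensional no-duality-gap proposition shows why no such counterexample can arise when $\dim x=1$: there every relaxed $x$ admits the canonical decomposition $\overline{\mu}\lfloor x\rfloor+\underline{\mu}\lceil x\rceil$, which serves uniformly for all scenarios, so the hypothesis of Proposition \ref{pro:parrecvalue} is automatically met. The Remark following Proposition \ref{pro:parrecvalue} therefore pinpoints where to aim: the instance must have $\dim x\geq 2$, and $s_{1},s_{2}$ must be chosen so that the optimal convex representations of $\hat{x}$ realizing $f^{s_{1}}(\hat{x})$ and $f^{s_{2}}(\hat{x})$ rely on \emph{different} sets of integer vertices of $\mathbb{X}$. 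The averaging that defines $\bar{T}^{\mathcal{P}}$ and $\bar{h}^{\mathcal{P}}$ then fuses the two scenarios' constraints onto a single $y^{\mathcal{P}}$, destroying both scenario-specific representations and forcing $Q_{\mathcal{P}}(\hat{x})$ strictly above $(p^{s_{1}}f^{s_{1}}(\hat{x})+p^{s_{2}}f^{s_{2}}(\hat{x}))/(p^{s_{1}}+p^{s_{2}})$, which is precisely what separates $(\hat{x},\hat{\theta}^{\mathcal{P}})$ from $conv(E^{\mathcal{P}})$.

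Concretely, I would work with $\mathbb{X}=\{0,1\}^{2}$, two equiprobable scenarios, and data $(T^{s_{j}},h^{s_{j}})$ engineered so that $Q_{s_{1}}$ is cheap on the diagonal $\{(0,0),(1,1)\}$ but expensive off-diagonal, while $Q_{s_{2}}$ is cheap on the antidiagonal $\{(1,0),(0,1)\}$ but expensive on the diagonal. Setting $\hat{x}=(\frac{1}{2},\frac{1}{2})$ and $\hat{\theta}^{s_{j}}=f^{s_{j}}(\hat{x})$ evaluated through the favourable decomposition places $(\hat{x},\hat{\theta}^{s_{j}})$ in $conv(E^{s_{j}})$ by construction, while a direct solution of the aggregated subproblem (\ref{equa11}) exhibits the required strict gap. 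The main obstacle lies in designing $(T^{s_{j}},h^{s_{j}},W,d)$ so that the common recourse matrix $W$ genuinely couples the two aggregated constraints and prevents the combined system from re-splitting into two cheap independent pieces; without such coupling the construction collapses back into the favourable case of Proposition \ref{pro:parrecvalue}, and the strict gap vanishes.
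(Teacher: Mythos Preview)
Your proposal is correct and takes essentially the same approach as the paper: the paper's counterexample is exactly an instantiation of your plan, with $\mathbb{X}=\{0,1\}^{2}$, two equiprobable scenarios given by $Q_{s_{1}}(x,y)=|x-y|$ (cheap on the diagonal) and $Q_{s_{2}}(x,y)=|x+y-1|$ (cheap on the antidiagonal), and test point $\hat{x}=(\tfrac12,\tfrac12)$. Your ``main obstacle'' is resolved trivially by a single scalar recourse variable $z$ with $W=(1,1)^{\top}$; after averaging, the aggregated system becomes $z\ge |y-\tfrac12|$, which equals $\tfrac12$ at every integer vertex, so $f_{\mathcal{P}}(\tfrac12,\tfrac12)=\tfrac12>0=\tfrac12 f^{s_{1}}(\tfrac12,\tfrac12)+\tfrac12 f^{s_{2}}(\tfrac12,\tfrac12)$.
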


\begin{proof}{Proof}
We use an simple example to support the theorem, which includes two scenarios with equal probability of occurrence, and has the feasible domain characterized by the following two linear systems:
	$$
	\begin{array}{ll}
		
        (I):~\left\{\begin{array}{l}
        z \geq x-y \\
        z \geq -x+y
        \end{array}\right.,
       ~~~~~~~
        (II):~\left\{\begin{array}{l}
        z \geq 1 -x - y \\
        z \geq x + y -1
        \end{array}\right..
	\end{array}
	$$
Here, $(x,y) \in \left\{0,1\right\}^2$ are the first-stage variables and $z \in \mathbb{R}$ is the second-stage variable. Figure 1 illustrates the feasible domain. As can be seen, 
in the point $(\frac{1}{2}, \frac{1}{2}, 0)$ the weighted mean of the two graphs in terms of $z$ axis is strictly lower than $\frac{1}{2}$, which results in the Lagrangian duality gap. When the scenario partition technique is adopted, the aggregated problem becomes 
$ \left\{\begin{array}{l}
        z \geq \frac{1}{2} - y \\
        z \geq y - \frac{1}{2}
        \end{array}\right.,$ which even eliminates the duality gap of the Lagrangian relaxation. Based on Lemma \ref{Lag}, the feasible region defined by all the Lagrangian cuts is equivalent to that of the Lagrangian dual relaxation, so we get the statement of Theorem \ref{StrongPLgC}.

\begin{figure}[H]\label{CaseFig}
		\centering
		\begin{minipage}[t]{0.48\textwidth}
			\centering
			\includegraphics[width=4cm]{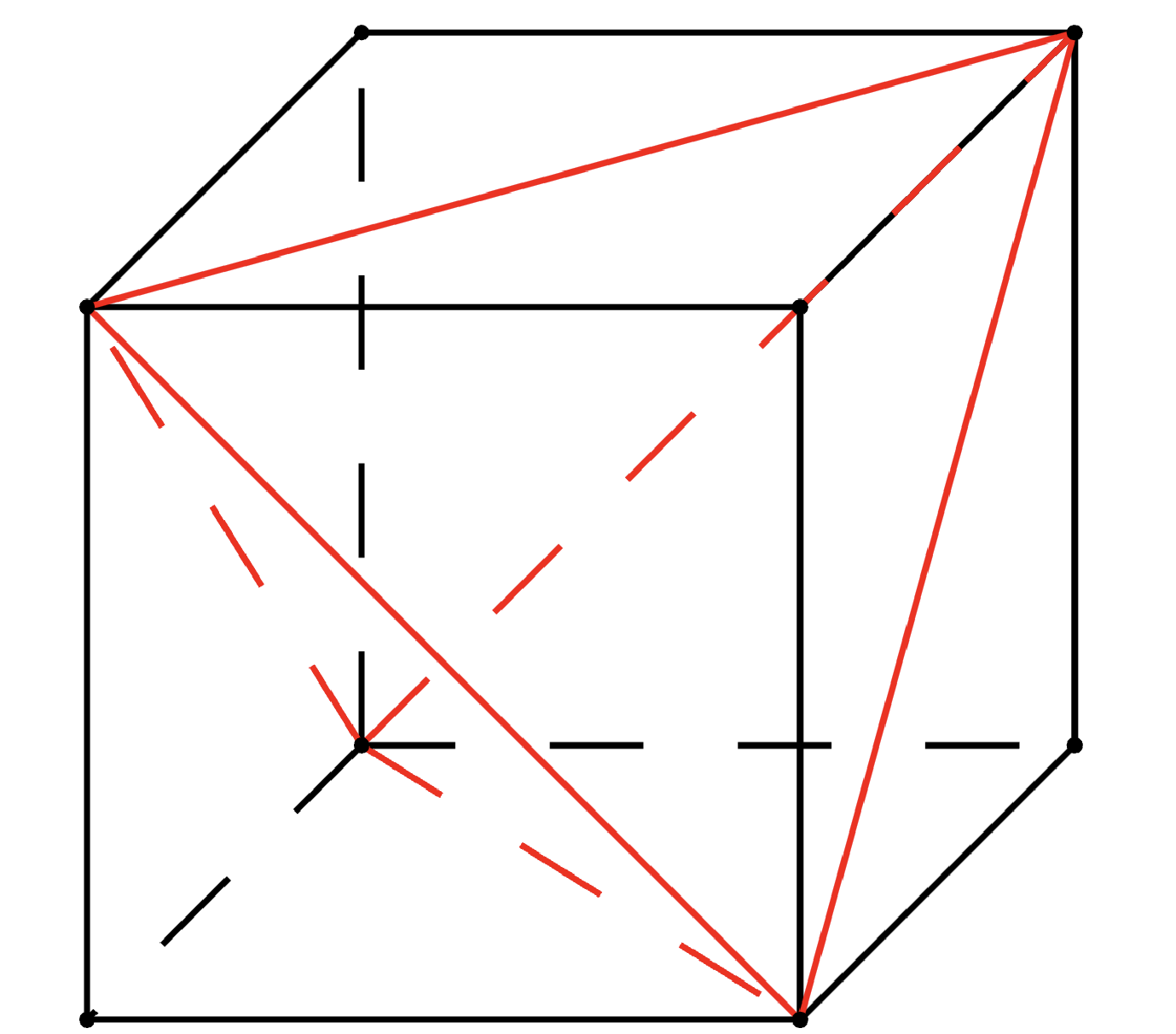}
			\centerline{(a)}
		\end{minipage}
		\begin{minipage}[t]{0.48\textwidth}
			\centering
			\includegraphics[width=4cm]{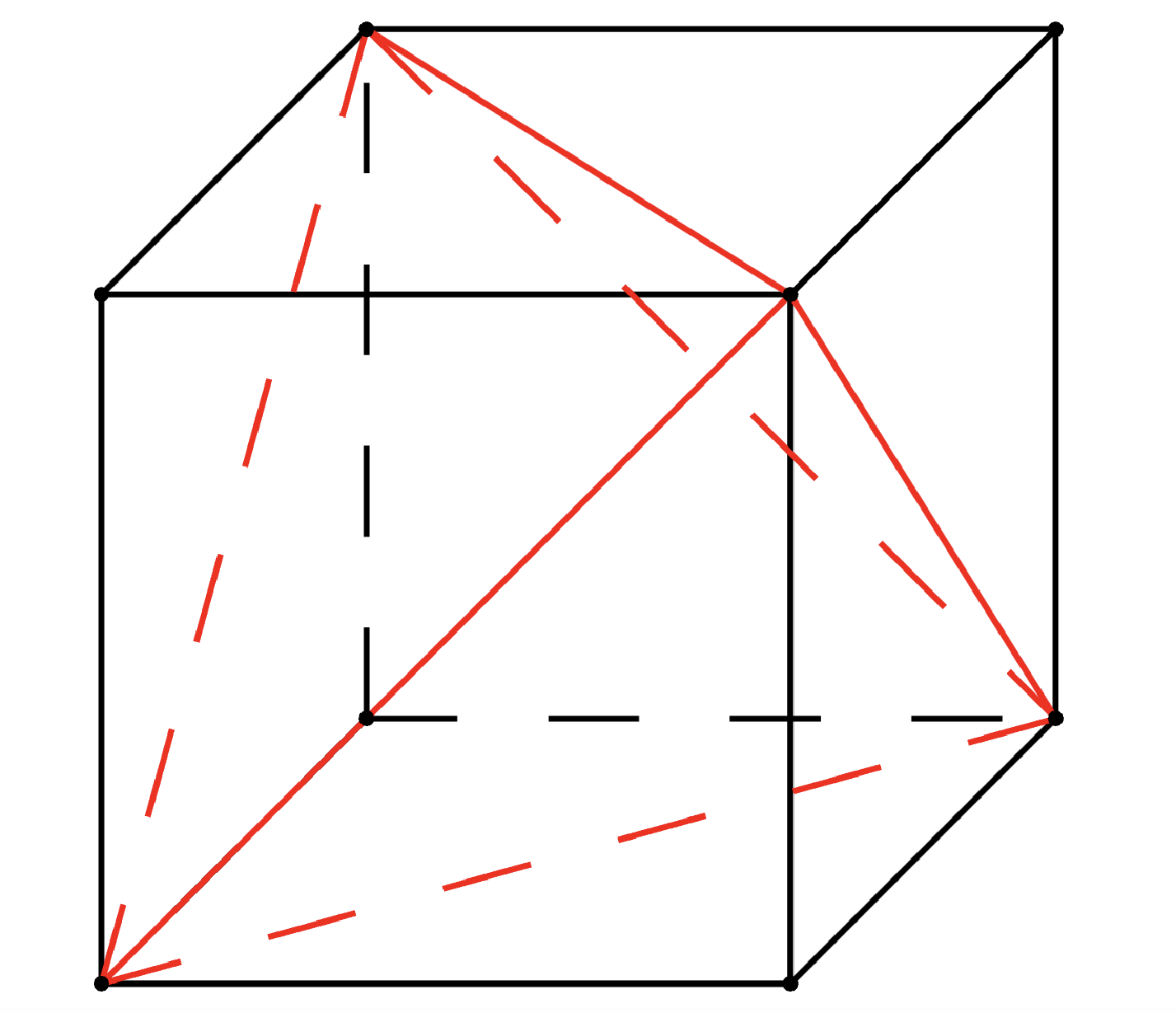}
			\centerline{(b)}
		\end{minipage}
\caption{Schematic diagram of the feasible domain: (a) system (I) and (b) system (II)}
	\end{figure}
\end{proof}\

\begin{remark}
Theorem \ref{StrongPLgC} means that PbLagCs, unlike PbBenCs, may cut off some region within the Lagrangian dual relaxation. It is thus possible to obtain a tighter relaxed feasible region than (\ref{strength}). Moreover, due to aggregation of scenarios, the number of scenarios looks ``smaller" in the aggregated problem (\ref{equa:parti}), which will lead to less time to generate PbLagCs than to produce the Lagrangian cuts.   
\end{remark}

\section{Experimental studies}
In this section, we implement experimental studies to test the proposed Algorithm \ref{a1}. 

\subsection{Implementation details}\label{sec5.1}
Three classes of two-stage SIP problems, including stochastic service location problem (sslp), a variant of the stochastic service location problem (sslpv), and stochastic multi-commodity flow problem (smcf) with different sizes are considered. The sslp problem (\cite{ntaimo2005million}) is a two-stage SIP with pure binary first-stage and mixed-binary second-stage variables. In this problem, the decision maker has to choose from $n_1$ sites to allocate servers with cost in the first stage. Then in the second stage, the availability of each client would be observed and every available client must be served at some site also with cost. The objective is to minimize the total cost. Note that integer variables are involved in the second stage of this problem, but to adapt to our specific context, we relax the integer restriction here, as done by Song and Luedtke (\cite{song2015adaptive}). The smcf problem (\cite{crainic2001bundle}) contains pure binary first-stage and continuous second-stage variables, in which the decision maker has to choose some edges with capacity constraint from the node-edge graph to transfer commodity flows. Then in the second stage, the demand of each commodity is available and must be transferred from its original node to the destination node by the chosen edges. Some information about these problems is presented in Table \ref{T1}. In the subsequent experiments, the instances for sslp and sslpv are generated according to the method proposed by (\cite{chen2022generating}) while the smcf instances are drawn upon from the work of Crainic et al. (\cite{crainic2001bundle}). In smcf, we consider the instances 'r04' and the stochastic demands are generated following the approach of (\cite{rahmaniani2018accelerating}).

\begin{table}
\caption{Profiles of three classes of problems.}
\label{T1}
\begin{center}
\begin{tabular}{llllll}
\hline
Instances& $\lvert S \rvert$& $n_1^\S$& $n_2$& $m_1$& $m_2$\\
\hline
sslp(40-50)$^\dag$& [50, 200]$^*$& 40& 2040& 1& 90\\
sslp(30-70)& [50, 200]& 30& 2130& 1& 100\\
sslp(20-100)& [50, 200]& 20& 2020& 1& 120\\
sslpv(40-50)& [50, 200]& 40& 2040& 1& 90\\
sslpv(30-70)& [50, 200]& 30& 2130& 1& 100\\
sslpv(20-100)& [50, 200]& 20& 2020& 1& 120\\
smcf(r04.1-r04.6)& [500,1000]& 60& 600& 1&660 \\
\hline
\end{tabular}
\begin{tablenotes}
    \footnotesize
        \item $^\dag$The first and second digit represent the number of locations and of customers, respectively; $^*$There are two cases of $|S|=50$ and $|S|=200$; $^\S$$n_1$, $n_2$, $m_1$ and $m_2$ share the same meanings with those given in (\ref{equa:1}).   
\end{tablenotes}
\end{center}
\end{table}

In the experiments, we follow the method put forth by (\cite{chen2022generating}) to execute the Benders dual decomposition, and the generated Lagrangian cuts are separated through the cutting-plane technique. When generating PbLagCs, i.e., running Algorithm \ref{a1}, the master problem possesses a unified structure for consistence within each scenario partition while 
the separation problem adopts (\ref{equa12}). Based on the setting in (\cite{van2018adaptive}) and (\cite{pay2020partition}), the refinement parameters $\delta$ and $\kappa_1$ take
$\delta=2/n^{2}$ with $n$ to denote the number of refinements and $\kappa_1=0.2$. We also conduct comparative experiments using the Benders dual decomposition algorithm and the classic Benders decomposition algorithm on the mentioned three classes of two-stage SIP problems. In addition, note that the refinement parameters have a large effect on experimental results, e.g., $\delta$ will affect the effectiveness of the adaptive algorithm greatly, we thus further provide testing experiments of Algorithm \ref{a1} on different refinement parameters. \

All the experiments are conducted on a Windows laptop with 8GB RAM and an Intel Core i5-7200U processor running at 2.5GHz with the
optimization solver Gurobi 9.5.0 and the Python (VS Code) compiler environment. The solving time limit for Algorithm \ref{a1} is set to be 1h. During the process of Benders dual decomposition, we stop generating the Lagrangian cuts when the gap closed by the last five iterations is less than $5\%$ of the total gap closed so far or the time limit is beyond. The following notations will be used to identify the details of our experimental results.

- \textbf{Partition-b$\&$d}: partition-based Benders dual decomposition (Algorithm \ref{a1});

- \textbf{B$\&$D}: Benders dual decomposition according to (\cite{chen2022generating});

- \textbf{Benders}: classic Benders decomposition to the linear relaxation of the problem;

- \textbf{T}: computational time (the unit is second) of an algorithm for the instance;

- \textbf{Ccut}: the number of PbLagCs added in the algorithm;

- \textbf{Fcut}: the number of Lagrangian cuts or Benders cuts added in the algorithm;

- \textbf{Refine}: refinement operation times;

- \textbf{Lower bound}: the final optimal value of the master problem;

- \textbf{$\lvert \mathcal{N} \rvert$}: the final partition size.

\subsection{Results and discussions}
Based on the given experimental details, we undertake the corresponding experiments on the instances of sslp, sslpv and smcf, with the results shown in Table \ref{T2}, Table \ref{T3}, Table \ref{T4}, respectively. In those tables, the results of lower bounder, the number of cuts added (\textbf{Ccut} or \textbf{Fcut}) and the computational time (\textbf{T}) are reported for \textbf{Partition-b$\&$d}, \textbf{B$\&$D} and \textbf{Benders}, where the optimal results are identified by bold for every instance. As can be seen, for most of the instances our proposed APbLagC algorithm can surpass the Benders dual decomposition algorithm both in the time required for enhancing the lower bound and in the count of cuts incorporated into the master problem. Moreover, the lower bound raised is universally higher through APbLagC algorithm in many instances but with the associated computational time significantly reduced. The main reason is that the size of their final scenario partition, i.e., $\lvert \mathcal{N} \rvert $, is notably smaller than that of the corresponding initial scenario configurations $\lvert S \rvert $. Of course, there are some instances for which our algorithm performs not so well and even worse than original bender dual decomposition, such as those bold results in the \textbf{B$\&$D} column of Table \ref{T3}. The possible reason may be that APbLagC algorithm cannot exhibit advantage for some instances with small size of scenarios. Additionally, it should be mentioned that some instances has run over the time limit 3600s for \textbf{B$\&$D}, the reason of which is that we need to implement the whole round of generating cuts. \

%\begin{sidewaystable}[htbp]
%\caption{Experimental results on sslp}
\begin{table}[htbp]
\centering
%\label{T2}
\rotatebox{90}{
\begin{threeparttable}
\begin{tabular}{lllllllllllll}\\
\hline
 Instances& $\lvert S \rvert$  & \multicolumn{5}{l}{Partition-b$\&$d} & \multicolumn{3}{l}{B$\&$D} & \multicolumn{3}{l}{Benders}
 \\ 
 &  &Lower bound       & Ccut      &  T  &refine & $\lvert \mathcal{N} \rvert$ &  Lower bound    & Fcut      & T      &  Lower bound     &   Fcut      &T   \\
 \hline
 sslp1(40-50)& 50 &   \textbf{-414}$^*$   &    78 &   \textbf{1310}   & 2 & 16& -414   &    404   &   2263   &    -594   &    705   &    7  \\
 sslp1(40-50)& 200 &   \textbf{-406}   &    156    &  \textbf{2164}   & 1 &  4 &  -441   &   1151    &   3600   &  -580    &   2590    &    16  \\
sslp1(20-100)& 50 &   \textbf{-884}   &    537   &  \textbf{1736}   & 3&50 & -884   &   910    &  3000    &    -1175   &    617   &   5   \\
sslp1(20-100)& 200 &    \textbf{-887}   &    90   &   \textbf{430}  & 1  & 2  &  -894   &   2000    &   3600   &   -1176   &    2359   &   13   \\
sslp1(30-70)& 50 &    \textbf{-599}   &   267   &   \textbf{2111}   & 2 &  29  & -599    &    551   &   2696   &  -812     &   565    &   10   \\
sslp1(30-70)& 200 &    \textbf{-603}   &   203    &   \textbf{984}   & 2& 70  &-627    &   1520   &   3600   &    -824   &    2690   &    15  \\

 sslp2(40-50)& 50 &    \textbf{-373}   &    91   &   \textbf{1700}   &  1 & 3  &  -392    &  479     &  3600    &    -566   &  546     &  10    \\
 sslp2(40-50)& 200 &   \textbf{-409}    &   55    &   \textbf{395}   & 1 &  12  & -430    &    1243   &  3600    &   -583    &   2443    &   22   \\

sslp2(30-70)& 50 &   \textbf{-560}    &  176     &   \textbf{3026}   & 1 &  9 & -560   &  787     &  3600    &    -804   &    734   &   9   \\
 sslp2(30-70)& 200  &   \textbf{-569}    &  130    &   3177   & 2 &  7  & -620    &   1408    &    \textbf{3052}  &   -821    &   2708    &   20.3   \\
 sslp2(20-100)& 50 &  -883     &   121    &   2800   & 2& 48  &-883     &  913     &   \textbf{2600}   &   -1185    &   503    &   7   \\
 sslp2(20-100)& 200 &     \textbf{-888}  &   241    &   \textbf{1009}   &  1  &  17 
 &    -888  &   2349    &   3600   &   -1175    &    2004   &    11  \\
\hline
\end{tabular}
\begin{tablenotes}
    \footnotesize
        \item $^*$The bold identification represents the optimal result among \textbf{Partition-b$\&$d}, \textbf{B$\&$D} and \textbf{Benders} for every instance, which also applies to \ref{T3} and \ref{T4}.   
\end{tablenotes}
\end{threeparttable}
}
\caption{Experimental results on sslp}
\label{T2}
\end{table}
%\end{sidewaystable}\

%\begin{sidewaystable}
%\caption{Experimental results on sslpv}
%\label{T3}
\begin{table}[htbp]
\centering
\rotatebox{90}{
\begin{tabular}{llllllllllllll}\\
\hline
 Instances& $\lvert S \rvert$  & \multicolumn{5}{l}{Partition-b$\&$d} & \multicolumn{3}{l}{B$\&$D} & \multicolumn{3}{l}{Benders} \\ 
 &  &Lower bound       & Ccut      &  T  &Refine & $\lvert \mathcal{N} \rvert$ & Lower bound    & Fcut      & T      &  Lower bound     &   Fcut      &T   \\ 
 \hline
 sslpv1(40-50)& 50 &    -484   &    161   &   3600   & 1 &7 &\textbf{-471}   &     702  &   3600   &   -594   &    753   &    10  \\
 sslpv1(40-50)& 200 &   \textbf{ -495}  &     25   &  \textbf{219}  &1&3 &   -496   &  1165   &    3600  &   -580   &   2286    &  16    \\
sslpv1(20-100)& 50 &   -999    &   248    &   \textbf{910}   & 2 & 26&  \textbf{-995}    &    974   &    2700  &   -1175    &  549     &    8  \\
sslpv1(20-100)& 200 &   \textbf{-998}    &    136   &   \textbf{537} &  2& 20 &   -1004   &    1934   &  3600    &   -1176    &   2193    &   15   \\
sslpv1(30-70)& 50 &   -679   &  93   &   \textbf{927}   & 1 & 5 &  \textbf{-675}    &   671    &  3600   &  -812     &   678    &   8   \\
sslpv1(30-70)& 200 &    \textbf{-690}   &    401   &   \textbf{1569}   & 3 &  95&  -704  &  1356     &   3600   &   -824    &   2063    &   15   \\

 sslpv2(40-50)& 50 &    -461   &   109    &   \textbf{1370}   & 2&  16& \textbf{-460}    &  760     &  3600    &   -571    &    634   & 8  \\
 sslpv2(40-50)& 200 &  \textbf{-481}     &    207   &   \textbf{1337}  & 1& 28 &   -490    &      1168 &    3600  &   -583    &   2393    &   16   \\

sslpv2(30-70)& 50 &   \textbf{-654}    &    179   &   \textbf{1542}   & 2 &23  &  -658  &   687  &  3001 &    -804   &    637  &   8   \\
 sslpv2(30-70)& 200  &    \textbf{-678}   &  151    &   \textbf{1396} &1 & 10 &   -692    &     1367  &  3600    &   -821    &    2762   &    16  \\
 sslpv2(20-100)& 50 &   \textbf{1003}    &   201    &   \textbf{2294}  & 2& 12&   -1003   &    769   &  2770    &    -1185   &    584  &   6   \\
 sslpv2(20-100)& 200 &   \textbf{-998}    &   138    &   \textbf{1618}  & 2&7 &   -1016   &     1580  &  2200    & -1175     &   2058    &  20   \\
\hline
\end{tabular}
}
\caption{Experimental results on sslpv}
\label{T3}
\end{table}
%\end{sidewaystable}

%\begin{sidewaystable}
\begin{table}[htbp]
\centering
\rotatebox{90}{
\begin{tabular}{llllllllllllll}\\
\hline
 Instances& $\lvert S \rvert$  & \multicolumn{5}{l}{Partition-b$\&$d} & \multicolumn{3}{l}{B$\&$D} & \multicolumn{3}{l}{Benders} \\ 
 &  &Lower bound       & Ccut      &  T &Refine& $\lvert \mathcal{N} \rvert$ &   Lower bound    & Fcut      & T      &  Lower bound     &   Fcut      &T   \\
 \hline
smcf(r04.1)   & 1000 &  \textbf{32730}     &    4004   &  \textbf{1950}    &  2 &156& 32183   &    8799   &    3300  &   29672    &   45407    &   190   \\
smcf(r04.2)  & 1000 &    \textbf{48864}   &    3990   &  \textbf{2889}   &  2& 391&  47702     &   5901    &  3600    &   37534    &   34893    &    159  \\
smcf(r04.3)   & 1000 &    \textbf{67375}   &   1953    &     \textbf{3793} & 1& 138&  62500    &    3882   &    3800  &    45115   &    26039   &    115  \\
smcf(r04.4)  & 1000 &   \textbf{34680}   &   6328    &   3600   & 1 & 368&  34466   & 5820      &  3600    & 32824  &    37847   &   173   \\
smcf(r04.5)  &1000& \textbf{53276} &    2795   &   \textbf{3536}   & 1 & 348&  50870  &   2983    &   4670    &    45641  &    39241 & 83      \\
smcf(r04.6)  & 1000 &   \textbf{77011}    &    1180  & \textbf{3600}    & 1 & 96 &   69271   &    2910   &  4925    &  59952 &  34152     &   167   \\
smcf(r04.1)  & 500 &   \textbf{32605}    &    920  &  \textbf{1000}   & 2&92 &   32172    &    600   &   1439   & 29626 &  21924   &    97\\
smcf(r04.2)  & 500 &    \textbf{50186}  &  2619    &   \textbf{2526}   &1 & 153&   48527   &    3911   &  3394    &  37424 &   15848   &   73  \\
smcf(r04.3)  & 500 &    \textbf{68499}  &   1230   &  \textbf{3338}  & 2& 289&  63197     &   1400    &   3600   & 45040  &    15040   &   69   \\
smcf(r04.4)  & 500 &   34381    &    3588  &    3581  & 1& 275 & \textbf{34434}     &   4978    &   \textbf{3444}   &  32626&  33924     &99\\
smcf(r04.5)  & 500 &    \textbf{54057}   &    1760 &   \textbf{3210}  &1 & 193 &   52748   &    1156   &     3619 &  45404 &    20228   &     160 \\
smcf(r04.6)  & 500 &   \textbf{74212}    &    1789  &3600& 2 & 199 & 702231    &   1960    &    3600  & 59515  &   17786    &  80    \\
\hline
 \end{tabular}
%\end{sidewaystable}
}
\caption{Experimental results on smcf}
\label{T4}
\end{table}

In order to observe more visually the evolving trends of lower bounds over time obtained by \textbf{Partition-b$\&$d} and \textbf{B$\&$D}, we display some representative instances for sslp, sslpv and smcf in Figure \ref{fig3}, Figure \ref{fig4} and Figure \ref{fig5}, respectively. It is obvious that our proposed adaptive algorithm can improve the lower bound rapidly. All of the exhibited curves rise rapidly towards the optimal values at the beginning period, and then slow down gradually after implementing refinement. This further demonstrates that APbLagCs algorithm is very efficient in solving two-stage SIP problems compared to \textbf{B$\&$D}. It also needs to be pointed out that for some instances, APbLagCs algorithms end with nearly the same time and lower bound as \textbf{B$\&$D}, which may result from a relatively large size of scenario partitions needed for the former during solving, and a great deal of time is consumed to get slight improvement of lower bound, such as the instance r04.2-1000 in Figure \ref{fig5} (b) and Table \ref{T4}. Naturally, there are some rooms to improve the efficiency of our proposed algorithm, such as considering more elaborate (heuristic) refinement policy and termination condition. Here, we will not give more discussions about it, but keep which as a point of future studies.

\begin{figure}[htbp]

	\begin{minipage}{0.47\linewidth}
		\vspace{3pt}
		\centerline{\includegraphics[width=\textwidth]{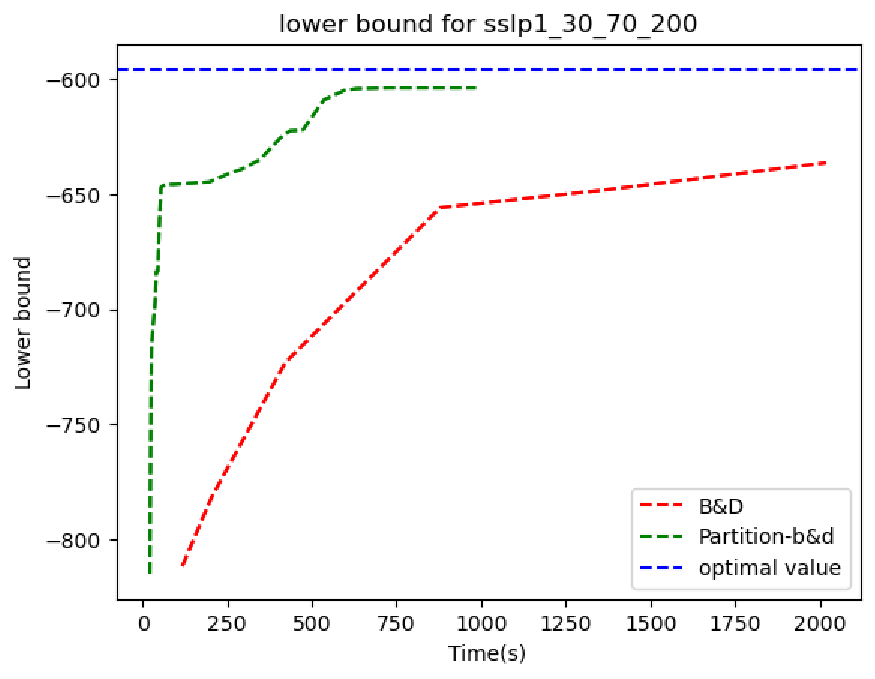}}
	 
		\centerline{(a)}
	\end{minipage}
	\begin{minipage}{0.47\linewidth}
		\vspace{3pt}
		\centerline{\includegraphics[width=\textwidth]{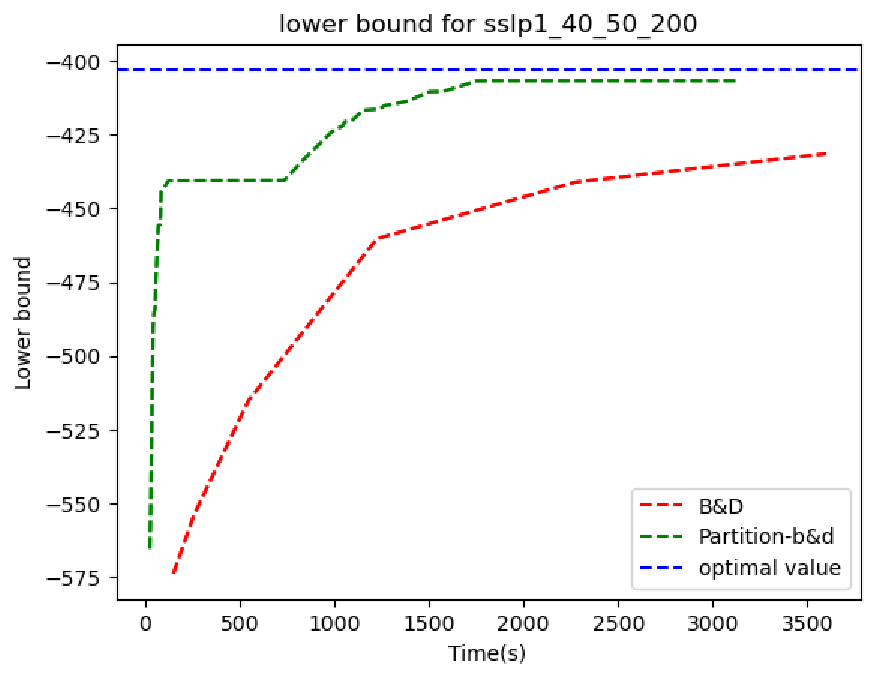}}
	 
		\centerline{(b)}
	\end{minipage}
 
	\caption{Visualization of sslp instances: (a) sslp1\_30\_70\_200; (b) sslp1\_40\_50\_200.}
	\label{fig3}
\end{figure}

\begin{figure}[htbp]

	\begin{minipage}{0.47\linewidth}
		\vspace{3pt}
		\centerline{\includegraphics[width=\textwidth]{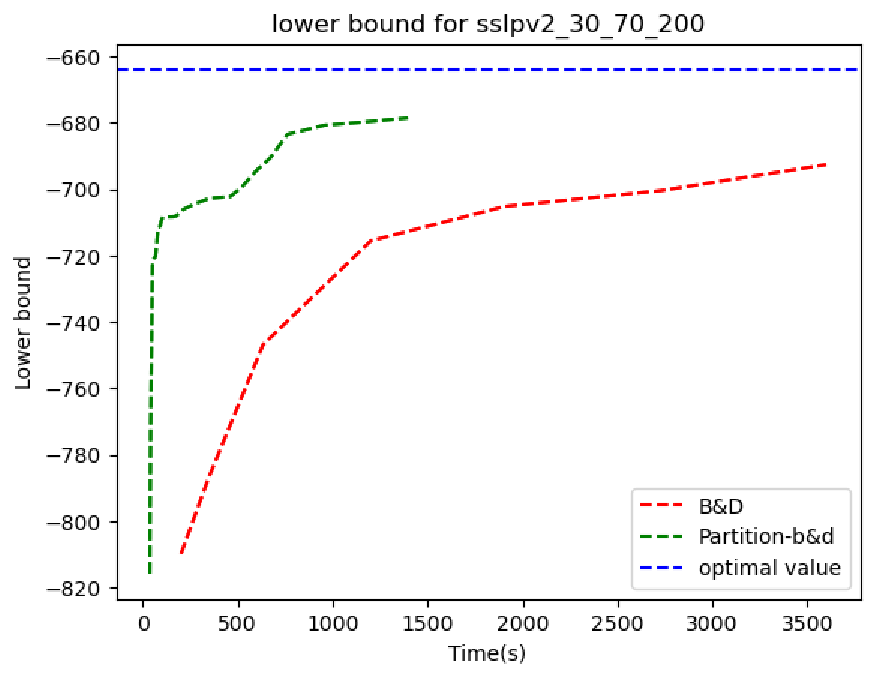}}
	 
		\centerline{(a)}
	\end{minipage}
	\begin{minipage}{0.47\linewidth}
		\vspace{3pt}
		\centerline{\includegraphics[width=\textwidth]{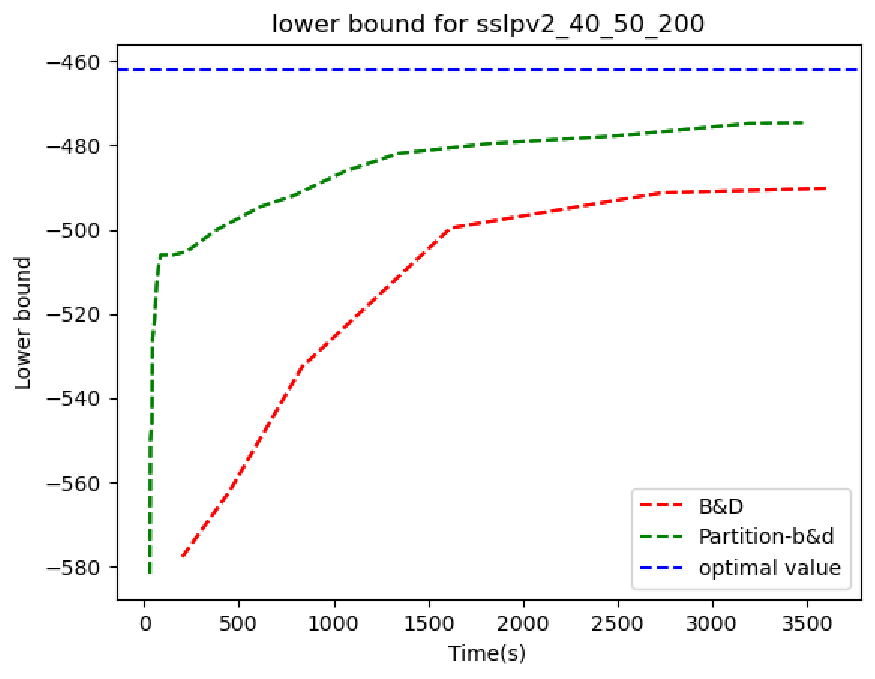}}
	 
		\centerline{(b)}
	\end{minipage}
 
	\caption{Visualization of sslpv instances: (a) sslpv2\_30\_70\_200; (b) sslpv2\_40\_50\_200.}
	\label{fig4}
\end{figure}

\begin{figure}[htbp]
	
	\begin{minipage}{0.47\linewidth}
		\vspace{3pt}
        %这个图片路径替换成你的图片路径即可使用
		\centerline{\includegraphics[width=\textwidth]{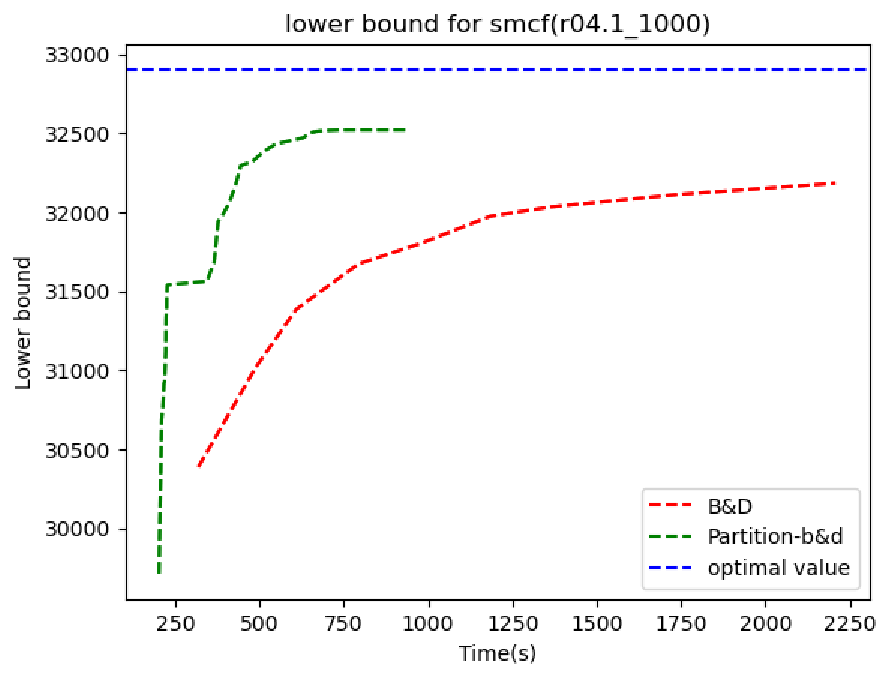}}
          % 加入对这列的图片说明
		\centerline{(a)}
	\end{minipage}
	\begin{minipage}{0.47\linewidth}
		\vspace{3pt}
		\centerline{\includegraphics[width=\textwidth]{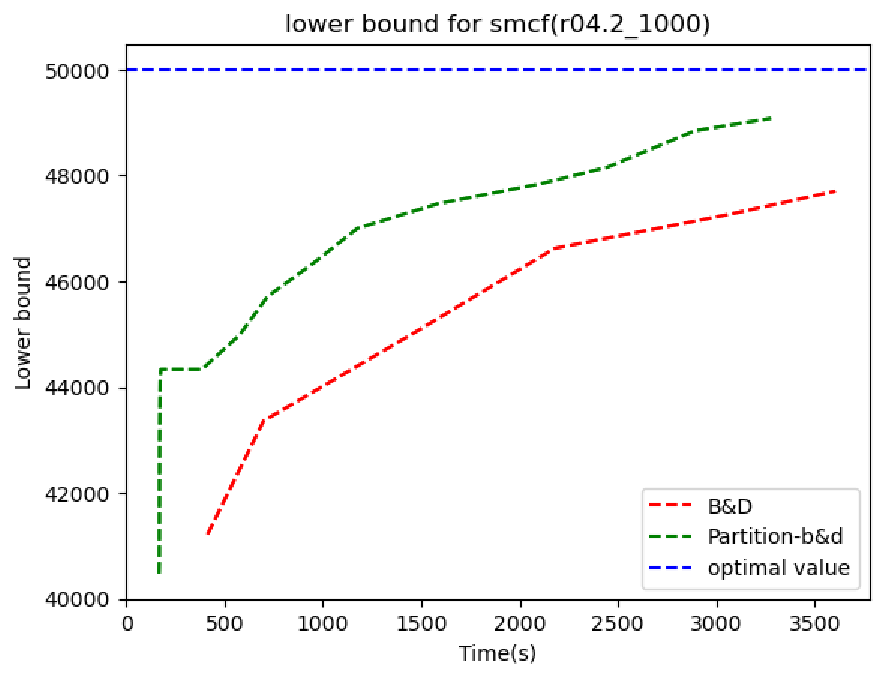}}
	 
		\centerline{(b)}
	\end{minipage}

	\caption{Visualization of smcf instances: (a) r04.1\_1000; (b) r04.2\_1000.}
	\label{fig5}
\end{figure}\

\begin{table}[htbp]
\caption{Experimental results of APbLagCs algorithm with different refinement parameter $\delta$}
\label{T6}
\centering
\begin{tabular}{llllllll}
\hline
 Instances& $\lvert S \rvert$  & \multicolumn{3}{l}{$\delta = 2/n^2$} & \multicolumn{3}{l}{$\delta = 1/n^2$}\\ 
 &  & LB$^*$       &   $\lvert \mathcal{N} \rvert$   &  T   &   LB   & $\lvert \mathcal{N} \rvert$     & T        \\ \hline
sslp1(40-50)& 50 &   -414   &    16 &   1310   & -416  &    49   &  3600   \\
 sslp1(40-50)& 200 &   -406   &    4    &  2164&    -429  &   86    &  3192 \\
sslp1(20-100)& 50 &   -884   &    50   &  1736  &  -884    &50&1736\\
sslp1(20-100)& 200 &    -887   &    2  &   430   &   -887  &2&   430   \\
sslp1(30-70)& 50 &    -599   &   29   &   2111   &-596&49&  2666       \\
sslp1(30-70)& 200 &    -603   &   70   &   984   &-629& 198  &  2963   \\
 sslp2(40-50)& 50 &    -373   &    3&   1700&   -373   &3&1700\\
 sslp2(40-50)& 200 &   -409    & 12    &   395   &   -409    &    26  &  430  \\
sslp2(30-70)& 50 &   -560    &  9&   3026   &   -560   &   46   &3364\\
 sslp2(30-70)& 200  &   -569    &  7    &   3177   &   -605   & 105  &   3600  \\
 sslp2(20-100)& 50 &  -883     &   48    &   2800   &  -883   &20&  2342  \\
 sslp2(20-100)& 200 &     -888  &   17  &   1009  &   -887   &    123   &   3380  \\
 
  sslpv1(40-50)& 50 &    -484   &    7&   3461   &  -487    &45& 3557  \\
 sslpv1(40-50)& 200 &    -495  &     3   &  219  &     -495 & 3    & 219     \\
sslpv1(20-100)& 50 &   -999    &   26    &   910  & -999     &26&910 \\   
sslpv1(20-100)& 200 &   -998    &    20   &   537&   -998   &   19    &  693 \\
sslpv1(30-70)& 50 &   -679   &  5   &   927&   -679   &    5  &  927\\
sslpv1(30-70)& 200 &    -690   &    95&   1569   & -690   & 6 &   1023  \\

 sslpv2(40-50)& 50 &    -461   &   16   &   1370&  -461    & 16 & 1370  \\
 sslpv2(40-50)& 200 &  -481     &    28&   1337  &  -483 &      46 &  3145    \\

sslpv2(30-70)& 50 &   -654    &    23   &   1542   & -654     &  28   &  1650  \\
 sslpv2(30-70)& 200  &    -678   &  10    &   1396  &  -678    &  17  &  2358  \\
 sslpv2(20-100)& 50 &   -1003    &  12   &   2294   &   -1003   &   4    &338\\
 sslpv2(20-100)& 200 &   -998    &   7   &   1618 &   -997  &  6    & 1133  \\
smcf(r04.1)   & 1000 &  32730     &    156   &  1950    &   32730    & 156      &  1950    \\
smcf(r04.2)  & 1000 &    48864  &    391   &  2889    &   48670    &   393    & 3600 \\
smcf(r04.3)   & 1000 &    67375  &    138   &     3793 &   67375   &   138    & 3793 \\
smcf(r04.4)  & 1000 &   34680   &   368   &   3600   &  34583     &    377   &  3600  \\
smcf(r04.5)  &1000& 53276 &   348   &   3536   &   53276   &   348   &    3536    \\
smcf(r04.6)  & 1000 &   77011    &    96&  3600    &  77011  &  96    &  3600    \\
smcf(r04.1)  & 500 &   32605    &    92  &  1000    &      32605 &92       &1000\\
smcf(r04.2)  & 500 &    50186   &  153&   2526   &   50186    &   153    &    2526  \\
smcf(r04.3)  & 500 &    68499   &   289   &  3338    &   68499    &   289    &    3338    \\
smcf(r04.4)  & 500 &   34381    &    275  &    3581  &    34386   &   278   &3021\\
smcf(r04.5)  & 500 &     54057   &    193 &   3210   &   54430    &    197   &   3477    \\
smcf(r04.6)  & 500 &   74212    &    199  &3600&      74128 &    201  &    3684   \\
\hline
 \end{tabular}
 \begin{tablenotes}
    \footnotesize
        \item $^*$LB means lower bound.   
\end{tablenotes}
 \end{table}
As mentioned in Subsection \ref{sec5.1}, the refinement parameter $\delta$ has a large effect on the experimental results, which is set to be $2/n^{2}$ in the above experiments. In the following, we will test its impact by setting it to be different values, including $\delta = 2/n^2$ and $\delta = 1/n^2$. To compare more intuitively, we treat the result at $\delta = 2/n^2$ as the baseline, that is when implementing the partition-based algorithm for $\delta = 1/n^2$, we attempt to make the lower bound attain the former's, and then to observe their running time. Table \ref{T6} reports the corresponding results. As can be seen, for instances of sslp and sslpv $\delta$ has a tremendous impact to the algorithmic effectiveness. Basically, $\delta = 1/n^2$ may result in larger scenario partition size and therefore consume much more time.  Nevertheless, it is not always the case that the larger $\delta$ performs better. The exceptions happen to instances of sslp2\_20\_100\_50, sslpv1\_30\_70\_200, sslpv2\_20\_100\_50 and sslpv2\_20\_100\_200, where smaller $\delta$ results in a smaller final scenario partition size and consumes less running time. This is because the larger size of scenario partition in the initial rounds reduces the number of refinements in these instances. For instances of smcf, the impact of $\delta$ is relatively small. In addition, we also test the situation where $\delta=0.5/n^2$, in which the result is similar to that at $\delta = 1/n^2$, and we thus ignore it here. More research may be needed on analyzing how to set hyper-parameters and developing more effective heuristic refinement methods to keep the scenario partition size as small as possible.

\section{Conclusion}
In this paper, we develop the APbLagCs algorithm based on Benders dual decomposition for solving two-stage SIPs with continuous recourse. The generated partition-based Lagrangian cuts are proved not necessarily dominated by any Lagrangian cut. We conduct extensive experiments using the algorithm on various test instances commonly employed in the literature. Our experiments demonstrate that the proposed algorithm outperforms Benders dual decomposition in terms of computational time, the number of Lagrangian cuts added, and the ability to enhance the lower bound. However, similar to previous adaptive algorithms for two-stage SIP, the proposed approach is applicable only to cases with continuous recourse. Therefore, a potential future research direction is to extend the adaptive framework to encompass more general scenarios. Furthermore, we plan to investigate how the structural characteristics of the SIP influence the lower bound of Lagrangian relaxation during scenario partition implementation and  more effective heuristic refinement technology.
% Acknowledgments here
\ACKNOWLEDGMENT{%
This work was funded by the National Nature Science Foundation of China under Grant No. 12320101001 and 12071428.
}% Leave this (end of acknowledgment)

% Appendix here
% Options are (1) APPENDIX (with or without general title) or 
%             (2) APPENDICES (if it has more than one unrelated sections)
% Outcomment the appropriate case if necessary
%
% \begin{APPENDIX}{<Title of the Appendix>}
% \end{APPENDIX}
%
%   or 
%
% \begin{APPENDICES}
% \section{<Title of Section A>}
% \section{<Title of Section B>}
% etc
% \end{APPENDICES}

% References here (outcomment the appropriate case) 

% CASE 1: BiBTeX used to constantly update the references 
%   (while the paper is being written).
\bibliographystyle{informs2014trsc} % outcomment this and next line in Case 1
\bibliography{references} % if more than one, comma separated

% CASE 2: BiBTeX used to generate mypaper.bbl (to be further fine tuned)
%\input{mypaper.bbl} % outcomment this line in Case 2

\end{document}